\documentclass{shortnotes}
\usepackage{projectcommands} 
\title{$p$-adic Hodge theory of unipotent completions revisited}
\author{Ruth Wild}
\date{\today}
\abstracttext{Let $K/\qp$ be a finite extension, and let $X/K$ be an algebraic variety. We present a new proof that the unipotent completion of the \'etale fundamental group of $X_{\bark}$ is a pro-de Rham representation, and show that it is pro-semi-stable whenever $X$ has semi-stable \'etale cohomology. We also reprove, using syntomic methods, that it is pro-crystalline at integral base points whenever $X$ admits a smooth proper formal model. All three proofs use the bar construction and are inspired by Wojtkowiak's approach to unipotent fundamental groups.}

\begin{document}
\maketitlepage
\section{Introduction}
Let $K/\qp$ be a finite extension with algebraic closure $\bark$ and Galois group $G_K$. 
Let $X$ be an algebraic variety over $K$.
In $p$-adic Hodge theory, various properties of the \'etale cohomology groups
$
\hh^i(X_{\bark},\qp)
$
as $G_K$-representations have been established:\footnote{We do not attempt to give an exhaustive list of attributions for these deep theorems, as this would be longer than the current body of our paper. Also, we restrict the attributions to results proved in the algebraic setting. Of course, there is also an extensive list of results for more general rigid-analytic varieties.}
They are always de Rham representations, e.g.\ \cite{beilinson,deglise-niziol,kisin-pss}.
If $X$ arises as the generic fiber of a proper scheme $\calx$ over $\ok$ with semi-stable reduction, then the $\hh^i(X_{\bark},\qp)$ are semi-stable Galois representations, e.g.\ \cite{tsuji,almost-faltings,niziol-k-theory}.
Moreover, if $\calx$ has good reduction, the $\hh^i(X_{\bark},\qp)$ are even crystalline; see, e.g., \cite{faltings-crystalline} for a proof that does not deduce this from semi-stability.
\par
A natural question is how these properties extend to the \emph{pro-unipotent completion} $\piuni(X_{\bark},x)$ of the geometric \'etale fundamental group of $X$.
This is a pro-unipotent group over $\qp$,  together with a Galois action induced by a $K$-rational point $x \in X(K)$.
Its abelianization is constant, independent of $x$ and satisfies
\[
\piuni(X_{\bark},x)^{\mathrm{ab}}(\qp)
=
\hh^1(X_{\bark},\qp)^\vee;
\]
the right-hand side is indeed a de Rham representation.
To make sense of the question in general, we make the following definition:
\begin{defn}
  \label{defn:de-rham-start}
Write $\piuni(X_{\bark},x) = \spec(A_{\infty,x})$, for some uniquely determined commutative Hopf-algebra $A_{\infty,x}$.
The $G_K$-action on $\piuni(X_{\bark},x)$ determines a $G_K$-action on $A_{\infty,x}$, such that $A_{\infty,x}$ lives naturally in $\ind(\repqp(G_K))$.
We say that $\piuni(X_{\bark},x)$ is \emph{pro-de Rham} if $A_{\infty,x}$ lies in the full subcategory $\ind(\repdr(G_K))$ of ind-de Rham representations.

\end{defn}
\begin{rem*}
In the literature, one usually defines pro-de Rhamness of $\piuni(X_{\bark},x)$ using the notion of being associated by a comparison isomorphism to the unipotent de Rham fundamental group. 
Even if our definition appears to be weaker, once pro-de Rhamness of $\piuni(X_{\bark},x)$ is established, an isomorphism 
\[
  \piuni(X_{\bark},x)
  \tensor_{\qp}
  \bdr
  \isomorphism
  \pi_1^{\mathrm{dR,uni}}(X,x)
  \tensor_K
  \bdr
\]
witnessing that $\piuni(X_{\bark},x)$ is associated to its Hodge-theoretic counterpart can be established using the notion of universal unipotent objects; see, e.g. \cite{hadian,betts-constancy}.
The only additional input from $p$-adic Hodge theory needed is that $\hh^1(X_{\bark},\qp)$ and $\hh^1_{\mathrm{dR}}(X/K)$ are associated.
\end{rem*}
The result of our paper can be stated as follows:
\begin{theorem}
  \label{thm:intro-main}
  Let $X$ be an algebraic variety over $K$, and $x\in X(K)$ a $K$-rational point.
  \begin{enumerate}
  \item
  It always holds that $\piuni(X_{\bark},x)$ is pro-de Rham.
  \item
  If all $\hh^i(X_{\bark},\qp)$ are semi-stable Galois representations, then $\piuni(X_{\bark},x)$ is pro-semi-stable.
  \item
  If $X$ arises as the generic fiber of a smooth proper formal scheme $\calx/\spf(\ok)$ and $x \in \calx(\ok)$, then $\piuni(X_{\bark},x)$ is pro-crystalline.
  \end{enumerate}
\end{theorem}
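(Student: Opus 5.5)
The plan is to describe the Hopf algebra $A_{\infty,x}$ explicitly through the bar construction, in the spirit of Wojtkowiak. The aim is to write $A_{\infty,x}$ as a filtered colimit $A_{\infty,x}=\varinjlim_n A_{n,x}$ of finite-dimensional $G_K$-representations. Each $A_{n,x}$ should be built out of tensor products of étale cohomology groups of products $X^m_{\bark}$ relative to diagonal-type closed subschemes. Since each of the three subcategories $\repdr(G_K)$, $\repst(G_K)$ and $\repcris(G_K)$ is closed under subquotients, extensions that split suitably, tensor products and duals, it then suffices to show two things: that the graded pieces of the $A_{n,x}$ lie in the relevant subcategory, and that the extensions gluing them do too.

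Concretely, consider the cosimplicial variety $[m]\mapsto X^m$, whose coface maps are given by the diagonals together with insertion of the base point $x$ at both ends. Totalizing its cochain complex $R\Gamma(X^\bullet_{\bark},\qp)$ gives the bar complex $B(\qp,R\Gamma(X_{\bark},\qp),\qp)$. Its $\hh^0$ is the ring of functions on the pro-unipotent completion; this is the classical comparison of Chen, Wojtkowiak and Hain, which holds étale-locally because unipotent local systems are detected by $\hh^1$ and $\hh^2$. The length filtration is induced by the skeleta. It identifies $A_{n,x}$ with $\hh^n$ of the pair $(X^n_{\bark}, D_n)$, where $D_n\subset X^n$ is the normal-crossing-type union of the subvarieties $\{x_1=x\}$, $\{x_i=x_{i+1}\}$ and $\{x_n=x\}$. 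This is Wojtkowiak's cosimplicial description, and I would verify it by running the spectral sequence of the skeletal filtration. The key point is that this identification is $G_K$-equivariant, since every map involved is algebraic and defined over $K$ because $x\in X(K)$.

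With this in place, part (1) follows from de Rhamness of relative étale cohomology $\hh^n(X^n_{\bark},D_n;\qp)$ of a pair of varieties over $K$, which is covered by the cited results of \cite{beilinson,deglise-niziol}. Alternatively, one can reduce to the absolute case via the Mayer--Vietoris spectral sequence for the strata of $D_n$ and use that de Rham representations form an abelian category stable under extensions inside $\repqp(G_K)$.

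Part (2) is the main obstacle. Semi-stable representations are not closed under extensions, so the spectral sequence argument alone does not suffice. The plan is first to show that every stratum of $D_n$ is isomorphic to a product $X^k$. By Künneth, each such stratum then has cohomology built from tensor products of the $\hh^i(X_{\bark},\qp)$, which are semi-stable by hypothesis. It then remains to control the extension classes. Here I would use the de Rham result from part (1) together with the fact that an extension of semi-stable representations which comes from geometry via a relative cohomology long exact sequence is again semi-stable. To justify this, I would realize the relevant $\hh^n(X^n,D_n)$ as compactly supported or relative cohomology. Such cohomology is semi-stable whenever the pieces are, by the $\mathbf{B}_{\mathrm{st}}$-comparison for simplicial or relative varieties from \cite{deglise-niziol}, which is functorial in the simplicial diagram.

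For part (3), I would use syntomic cohomology of the formal model $\calx$. The integral point $x\in\calx(\ok)$ makes all the coface maps extend to the products $\calx^m$, which are smooth proper formal schemes. Applying the crystalline comparison to the cosimplicial formal scheme $\calx^\bullet$ and totalizing then gives a crystalline structure on each $A_{n,x}$. The delicate point is that the totalization of the syntomic complexes must commute with the comparison map. This should follow from finiteness in each degree, because only finitely many cosimplicial degrees contribute to each $A_{n,x}$.
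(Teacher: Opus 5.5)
Your overall strategy is viable and genuinely different from the paper's. However, you have the extension-closure properties the wrong way round, and that is exactly where the real inputs sit.

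\textbf{Part (1).} De Rham representations are \emph{not} closed under extensions in $\repqp(G_K)$. For instance, the extension of $\qp$ by $\qp$ given by $\log\chi_{\mathrm{cyc}}$ is not even Hodge--Tate; this is the failure of the $\ext^1$-comparison the paper points out. So your Mayer--Vietoris alternative in part (1) fails. You need the comparison theorem for the pair itself, e.g.\ Beilinson's comparison applied to $X^n$, to the singular variety $D_n$, and to the restriction map between them. Your main argument does this, so it stands.

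\textbf{Part (2).} Here $\repst(G_K)$ \emph{is} closed under extensions inside $\repdr(G_K)$. This is Hyodo's theorem $\hh^1_g=\hh^1_{\mathrm{st}}$, the $i=1$ case in \cref{prop:semistable-derived}, and it is the key lemma you are missing. With it, part (2) is formal:
\begin{itemize}
\item $A_{n,x}$ is de Rham by part (1);
\item the strata spectral sequence plus K\"unneth gives $A_{n,x}$ a finite filtration whose graded pieces are subquotients of sums of tensor products of the $\hh^i(X_{\bark},\qp)$, hence semi-stable;
\item induction on the length of the filtration, applying Hyodo at each step, gives semi-stability of $A_{n,x}$.
\end{itemize}
Functoriality of the D\'eglise--Nizio\l{} comparison does not give this by itself, since for general varieties it only yields potential semi-stability. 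To argue that way you would have to add one step: inertia acts on $D_{\mathrm{pst}}$ through a finite quotient and trivially on the graded pieces, hence trivially by Maschke.

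\textbf{Minor point on the identification.} For $x=y$ the pair only sees the augmentation ideal. Already for $n=1$ one has $\hh^1(X_{\bark},\{x\})=\hh^1(X_{\bark},\qp)$, which misses the constants in $A_{1,x}$. So the correct statement is $A_{n,x}\cong\qp\oplus\hh^n(X^n_{\bark},D_n)$, with the constants split off Galois-equivariantly by the counit. This is harmless for the argument. Note also that both routes rest on the same black box: Dan-Cohen--Schlank's identification of $\hh^0$ of the bar construction with $A_{\infty,x}$. Your remark about $\hh^1$ and $\hh^2$ is a heuristic, not a proof of it.

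\textbf{Comparison with the paper.} The paper never describes the length filtration geometrically. It lifts the commutative algebra $R\Gamma(X_{\bark},\qp)$ together with its augmentations to:
\begin{itemize}
\item $\dd[b](\repdr(G_K))$, using Petrov--Zavyalov and Beilinson's comparison as an isomorphism of algebras;
\item $\dd[b](\repst(G_K))$, via full faithfulness, which needs an additional $\ext^2$ computation;
\item $\perf(\oksyn)[1/p]$, via syntomic cohomology.
\end{itemize}
It then transports the whole bar construction formally along symmetric monoidal, colimit-preserving functors of ind-categories.

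Your route replaces this with the Beilinson--Wojtkowiak description of $A_{n,x}$ through finite diagrams of the varieties $X^m$. It needs no algebra structures, no derived description of $\dd[b](\repdr(G_K))$, and in the semi-stable case only $\ext^1$. The price has two parts. First, you need the geometric identification of the filtration. Second, for part (3), you need a comparison that is functorial at the level of complexes on the finite diagram of strata $\calx^m$. Cohomology-level functoriality would not suffice, since crystalline representations are not closed under extensions even inside $\repst(G_K)$. The $\infty$-functorial syntomic realization of \cref{thm:syn} provides the needed functoriality, and, as you say, finiteness of the diagram is what makes the totalization commute with the realization.
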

\par
As with the abelian comparison, the non-abelian statement for de Rhamness has been known for a long time; see e.g.\ \cite{vologodsky,deglise-niziol,betts-constancy}.
The statement for crystallinity has been established by \cite{olsson}.
To the best of our knowledge, semi-stability has so far only been established in the case of curves \cite{aik}.
Our approach to de Rhamness and semi-stability begins similarly as in \cite{deglise-niziol}, namely by lifting $R\Gamma(X_{\bark},\qp)$ to $\dd[b](\repdr(G_K))$.
This is not automatic: the induced functor $\dd[b](\repdr(G_K)) \to \dd[b](\repqp(G_K))$ is not fully faithful, and so containment cannot be shown on cohomology groups.
But a lift can be obtained once the \'etale--de Rham comparison has been formulated at the level of derived categories.
After this, we take a different route.
Their approach uses the motivic realization of $\piuni(X_{\bark},x)$ constructed by Cushman \cite{cushman} to bootstrap the cohomological comparison for de Rhamness from the level of derived categories to the unipotent fundamental group.
One can avoid using the language of motives by directly realizing $A_{\infty,x}$ from $R\Gamma(X_{\bark},\qp)$ through the bar construction.
This possibility was first observed by Wojtkowiak \cite{wojtkowiak}; see also \cite{olsson-bar}. In our context, we use the precise formulation for \'etale sheaves by Dan-Cohen--Schlank.
Ind-de Rhamness of $A_{\infty,x}$ can then be deduced by a formal argument from the fact that the functor $\dd[b](\repdr(G_K)) \to \dd[b](\repqp(G_K))$ commutes with bar constructions (after passing to $\ind$-categories).
\par
To obtain ind-semi-stability of $A_{\infty,x}$, we show that, unlike in the case of the inclusion of de Rham representations into all representations, the inclusion $\repst(G_K) \hookrightarrow \repdr(G_K)$ extends to a fully faithful functor on bounded derived categories; ind-semi-stability of $A_{\infty,x}$ is then again purely formal.
\par
For ind-crystallinity of $A_{\infty,x}$, however, we cannot use the same argument, since $\repcris(G_K) \sse \repst(G_K)$ is again not closed under extensions.
To get around this, we do not attempt to directly lift $R\Gamma(X_{\bark},\qp)$ to $\dd[b](\repcris(G_K))$.
Instead, we use that $R\Gamma(X_{\bark},\qp)$ can be realized from syntomic cohomology, that the (rationalized) realization functor is symmetric monoidal, and that its essential image is given by $\dd[b](\repcris(G_K))$.
This allows us to give a purely formal argument once again.
Unlike in the other two cases, this relies heavily on constructions in $p$-adic Hodge theory that are very recent.
However, we can largely treat these as black boxes.
\par
Finally, let us note that we also state and prove versions of \cref{thm:intro-main} for path torsors in all three cases.
\subsection*{Acknowledgements}
I would like to thank my PhD advisor Jakob Stix for his support and for the many clarifying and insightful discussions during the preparation of this paper.
I would also like to thank Lucas Mann and Alexander Petrov; discussions with both led to simplifications of the arguments presented.
Finally, I am grateful to Alexander Betts, Magnus Carlson and Benjamin Steklov for their interest in this work, many helpful questions and conversations about unipotent completions and their different presentations.
This project was funded by Deutsche Forschungsgemeinschaft (DFG, German Research Foundation) through the Collaborative Research Centre TRR 326 Geometry and Arithmetic of Uniformized Structures, Project ID 444845124.
\par
\textsc{AI-Disclosure:}
We used the cloud version of ChatGPT to locate various references, in particular those on $\infty$-categories.
Moreover, we used Codex with Astra 6 to check a first version of this script for mathematical errors, which lead to minor corrections; Codex was also used for proofreading and a grammatical review.

\section{Categorical preliminaries}

\begin{numtext}
  \label{text:bar-const}
  The main player of this paper is the bar construction in the context of symmetric monoidal (stable) $\infty$-categories, which we briefly recall:
  Let $(\ccat,\tensor,\idob)$ be a symmetric monoidal $\infty$-category, and $A \in \calg(\ccat)$ a commutative algebra object.
  Then one can define a category $\modcat_{A}(\ccat)$ of $A$-modules in $\ccat$ \cite[Defn.4.5.1.1]{higher-algebra}.
  If $\ccat$ admits geometric realizations of simplicial objects and $\tensor$ is exact in each variable, $\modcat_{A}(\ccat)$ can again be endowed with a symmetric monoidal structure \cite[Thm.4.5.2.1]{higher-algebra}.
  Informally, for two $A$-modules $M,N$, their tensor product is given as 
  \[
    M \tensor_A N 
    =
    \colim_{\Delta^\op}
    (M \tensor A^{\tensor n} \tensor N).
  \]
  If $\ccat$ is stable, $\modcat_A(\ccat)$ is stable too \cite[Cor.4.5.1.6, Thm.7.1.1.4]{higher-algebra}.
  Moreover, if $ x \mc A \to \idob$ is an augmentation, then 
  \[B(A,x) \defo \idob_x \tensor_A {}_x\idob\] is naturally an augmented coalgebra in $\ccat$ \cite[Thm.5.2.2.17, Rem.5.2.2.19]{higher-algebra}.
  Here, we use the notation $\idob_x$ (resp. ${}_x\idob$) to indicate that we view $\idob$ as a right- (resp. left-) $A$-module via the augmentation $x$.
  If $y \mc A \to \idob$ is a second augmentation, then 
  \[
  B(A,x,y) \defo \idob_x \tensor_A {}_y\idob
  \]
  is naturally a $B(A,x)$-$B(A,y)$-bicomodule \cite{dc-comodule}.
\end{numtext}

\begin{lem}
  \label{lem:induced-on-modules}
Let $\ccat, \dcat$ be presentable, stable, symmetric monoidal $\infty$-categories. 
Let $F\mc \ccat \to \dcat$ be a symmetric monoidal, colimit-preserving functor.
Let $A \in \calg(\dcat)$. 
Then any lift $\wtilde{A} \in \calg(\ccat)$ of $A$ induces a symmetric monoidal lift $\wtilde{F}\mc \modcat_{\wtilde{A}}(\ccat) \to \modcat_A(\dcat)$ of $F$.
\end{lem}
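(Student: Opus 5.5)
The plan is to build $\wtilde{F}$ in two steps: first map $\modcat_{\wtilde{A}}(\ccat)$ to $\modcat_{F(\wtilde{A})}(\dcat)$ using that $F$ is symmetric monoidal, and then transport along the identification $F(\wtilde{A}) \simeq A$ in $\calg(\dcat)$ that is part of the datum of a lift.

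\emph{Step 1: induced functor on modules.} A symmetric monoidal functor $F$ is in particular lax symmetric monoidal. It therefore induces a functor on the $\infty$-operads encoding pairs (commutative algebra, module). Concretely, via \cite[\S 4.5.1]{higher-algebra}, the categorical pattern $\modcat(\ccat) \to \calg(\ccat)$ is functorial in lax symmetric monoidal functors. Taking fibers over $\wtilde{A}$ gives a functor
\[
F_{\wtilde{A}}\mc \modcat_{\wtilde{A}}(\ccat) \to \modcat_{F(\wtilde{A})}(\dcat),
\]
compatible with the forgetful functors to $\ccat$ and $\dcat$. This is the sense in which it lifts $F$. Composing with the equivalence of module categories induced by $F(\wtilde{A}) \simeq A$ yields $\wtilde{F}$.

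\emph{Step 2: symmetric monoidality.} By \cite[Thm.4.5.2.1]{higher-algebra}, the tensor product on $\modcat_{\wtilde{A}}(\ccat)$ is the relative tensor product, computed as the geometric realization of the bar complex $M \tensor \wtilde{A}^{\tensor n} \tensor N$; likewise in $\dcat$. Since $F_{\wtilde{A}}$ is lax symmetric monoidal, there are natural comparison maps $F(M)\tensor_{F(\wtilde{A})} F(N) \to F(M\tensor_{\wtilde{A}} N)$ and $F(\wtilde{A}) \to F(\wtilde{A})$, the latter being the identity on unit objects. To see that the first map is an equivalence:
\begin{itemize}
\item $F$ is strong monoidal, so it carries the simplicial bar object $M \tensor \wtilde{A}^{\tensor \bullet} \tensor N$ levelwise to $F(M)\tensor F(\wtilde{A})^{\tensor\bullet}\tensor F(N)$.
\item $F$ preserves colimits, in particular geometric realizations.
\end{itemize}
Hence the comparison map is an equivalence, and $\wtilde{F}$ is symmetric monoidal. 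The paper's setup makes this formal; it is essentially \cite[Thm.4.5.3.1]{higher-algebra}, the functoriality of $\modcat$ in symmetric monoidal functors that preserve the relevant colimits, if one wants a citation. Finally, $\wtilde{F}$ preserves colimits, since colimits of modules are computed underlying, the forgetful functor being conservative and colimit-preserving, and $F$ preserves them.

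\emph{Main obstacle.} The main obstacle is the coherence in Step 1: producing $F_{\wtilde{A}}$ as a genuinely lax symmetric monoidal functor of $\infty$-categories, not just on homotopy categories. I would handle this by working with the $\infty$-operad $\mathcal{M}\mathrm{od}^{\mathrm{Comm}}(\ccat)^{\tensor} \to \calg(\ccat)$. I would then note that postcomposition with $F^{\tensor}$ gives a map of these fibrations over $F\mc\calg(\ccat)\to\calg(\dcat)$, and restrict to fibers. Checking that the result is symmetric monoidal then reduces to the bar-construction argument above, applied to the operadic tensor product.
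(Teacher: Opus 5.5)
Your argument is correct, but it takes a more hands-on route than the paper. The paper's proof is a one-line citation of Lurie's $\infty$-categorical functoriality of module categories (\cite[Cor.4.8.5.21]{higher-algebra}). That citation delivers $\wtilde F$ as a morphism of presentably symmetric monoidal categories with all coherences built in, so the paper never has to inspect the comparison maps.

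You instead construct $\wtilde F$ directly. Postcomposition with $F^{\tensor}$ on $\modcat^{\mathrm{Comm}}(-)^{\tensor}$, followed by restriction to fibres, gives a lax symmetric monoidal functor $\modcat_{\wtilde A}(\ccat)\to\modcat_{F(\wtilde A)}(\dcat)$. You then upgrade it to a strong one: $F$ carries the augmented bar construction computing $M\tensor_{\wtilde A}N$ to the one computing $F(M)\tensor_{F(\wtilde A)}F(N)$, and preserves its geometric realization. The only delicate point is the one you flag, namely that the lax structure map agrees with the map induced on realizations. This follows from the description of the tensor product on $\modcat_{\wtilde A}(\ccat)$ via the bar construction in \cite[\S4.5.2]{higher-algebra}, which shows that $\wtilde F$ preserves the coCartesian edges over active maps.

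What your route buys is transparency. It shows that only strong monoidality of $F$ and preservation of geometric realizations are used. (Both arguments also tacitly need $\tensor$ to preserve geometric realizations in each variable, which is read into ``presentable symmetric monoidal''.) It also directly produces the formula $\wtilde F(M\tensor_{\wtilde A}N)\simeq F(M)\tensor_A F(N)$, which is exactly what \cref{rem:lift-essential} uses. The citation buys brevity and complete coherence for free.

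One small point to check: \S4.5.3 of \cite{higher-algebra} concerns change of algebra inside a fixed $\ccat$, not change of the ambient category. So your parenthetical reference to Thm.~4.5.3.1 is likely not the packaged statement you want; the one the paper cites is.
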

\begin{proof}
This follows, e.g., from \cite[Cor.4.8.5.21]{higher-algebra}.
\end{proof}

\begin{rem}
  \label{rem:lift-essential}
  In particular, if $x,y\mc A \to \idob_{\dcat}$ are two augmentations in $\dcat$, lifting to augmentations $\wtilde{x},\wtilde{y}\mc \wtilde{A} \to \idob_{\ccat}$ in $\ccat$, then 
  \[
    \wtilde{F}(B(\wtilde{A},\wtilde{x},\wtilde{y}))
    =
    B(A,x,y).
  \]
\end{rem}

We cannot directly apply this in our setup, since bounded derived categories of abelian categories do not admit all colimits. 
This forces us to pass to ind-categories:

\begin{lem}
\label{lem:ind-basics}
Let $\ccat$ be a small $\infty$-category.
Let 
\[
  j \mc \ccat \to \pcat(\ccat) 
  = 
  \funcat(\ccat^{\mathrm{op}},\mathrm{Ani})
\]
be the Yoneda embedding, and write $\ind(\ccat) \sse \pcat(\ccat)$ for the ind-category of $\ccat$.
\begin{enumerate}
\item 
\label{ind-basics:presentation}
A functor $F \in \pcat(\ccat)$ lies in $\ind(\ccat)$ if and only if there is a functor $p \mc \icat \to \ccat$ for a small filtered category $\icat$, such that $F$ is a colimit of the composition $j \circ p$.
\item
\label{ind-basics:lift-functor}
Let $f \mc \ccat \to \dcat$ be a functor between small categories.
Then $f$ lifts to a functor 
\[\ind(f)\mc \ind(\ccat) \to \ind(\dcat)\] that is compatible with the Yoneda embedding.
If $f$ is fully faithful, then $\ind(f)$ is fully faithful too.
\end{enumerate}
\end{lem}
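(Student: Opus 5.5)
Both parts are standard facts about ind-completions of small $\infty$-categories, and I would deduce them from Lurie's treatment in \emph{Higher Topos Theory}, Section 5.3, where $\ind(\ccat)$ is defined as the full subcategory of $\pcat(\ccat)$ spanned by the filtered colimits of representables. Equivalently, when $\ccat$ is small, it consists of the presheaves $F$ whose associated right fibration has filtered total space (equivalently, whose category of elements $\ccat_{/F}$ is filtered).

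For part \ref{ind-basics:presentation}, the ``if'' direction is immediate from the definition. For ``only if'', take $F \in \ind(\ccat)$ and consider the category of elements $\icat \defo \ccat \times_{\pcat(\ccat)} \pcat(\ccat)_{/F}$ with its projection $p \mc \icat \to \ccat$. I would then make three checks:
\begin{itemize}
\item $\icat$ is essentially small, since $\ccat$ is small and the mapping anima of $\pcat(\ccat)$ are small;
\item $\icat$ is filtered, by the characterization above;
\item $F \simeq \colim (j \circ p)$, which is the co-Yoneda lemma: every presheaf is the colimit of the representables over it.
\end{itemize}
Replacing $\icat$ by an equivalent small category gives the required presentation.

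For part \ref{ind-basics:lift-functor}, I would define $\ind(f)$ by the universal property of $\ind(\ccat)$ as the free cocompletion under filtered colimits. Restriction along $j$ gives an equivalence between filtered-colimit-preserving functors $\ind(\ccat) \to \ecat$ and all functors $\ccat \to \ecat$, for any $\ecat$ admitting filtered colimits. Applying this to $\ecat = \ind(\dcat)$ and the composite $j_{\dcat} \circ f$ produces $\ind(f)$, together with an equivalence $\ind(f) \circ j_{\ccat} \simeq j_{\dcat} \circ f$. Concretely, $\ind(f)$ sends $\colim_{\icat} j(p(i))$ to $\colim_{\icat} j(f(p(i)))$.

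For full faithfulness, take $F = \colim_{i} j(c_i)$ and $G = \colim_{k} j(d_k)$, presented as in part \ref{ind-basics:presentation}. The key input is that representables are compact in $\ind(\ccat)$: filtered colimits in $\ind(\ccat)$ are computed in $\pcat(\ccat)$, and there $\Map(j(c), -)$ is evaluation at $c$, which commutes with all colimits. Hence
\[
\Map(F,G) \simeq \lim_{i} \colim_{k} \Map_{\ccat}(c_i, d_k),
\]
and the same formula holds in $\ind(\dcat)$ with $\Map_{\dcat}(f c_i, f d_k)$ in place of $\Map_{\ccat}(c_i, d_k)$. The map induced by $\ind(f)$ is the limit of the colimit of the maps $\Map_{\ccat}(c_i, d_k) \to \Map_{\dcat}(f c_i, f d_k)$, which are equivalences when $f$ is fully faithful. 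So $\ind(f)$ is fully faithful.

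The main obstacle is the step ``filtered colimits in $\ind(\ccat)$ are computed in $\pcat(\ccat)$'', which is exactly what makes representables compact. I would not reprove it: I would cite HTT Prop.~5.3.5.3 for this, together with Cor.~5.3.5.4 and Prop.~5.3.5.10 for the universal property, and Prop.~5.3.5.11 for full faithfulness of the extension.
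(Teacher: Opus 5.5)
Your proposal is correct and essentially matches the paper, whose proof consists solely of citing HTT Cor.~5.3.5.4 for part (1) and Prop.~5.3.5.10 for part (2). Your sketches (category of elements plus co-Yoneda, the universal property, and the $\lim_i \colim_k$ formula for mapping anima) are the standard arguments behind these results, and citing Prop.~5.3.5.11 for the full-faithfulness clause is actually more precise than the paper's reference. One small presentational point: in part (1) the two directions cannot both be formal, since whichever description of $\ind(\ccat)$ you take as the definition, one direction silently uses the equivalence you are proving. Under Lurie's right-fibration definition, it is the ``if'' direction that is non-trivial and needs Prop.~5.3.5.3.
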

\begin{proof}
Part \ref{ind-basics:presentation} is \cite[Cor.5.3.5.4]{htt}, part \ref{ind-basics:lift-functor} is \cite[Prop.5.3.5.10]{htt}.
\end{proof}

\begin{lem}
  \label{lem:lift-to-ind}
  \leavevmode
  \begin{enumerate}
  \item
  Let $\ccat$ be a small, idempotent-complete, stable, symmetric monoidal $\infty$-category.
  Then $\ind(\ccat)$ is a presentable, stable, symmetric monoidal $\infty$-category.
  \item
  Let $f \mc \ccat \to \dcat$ be a symmetric monoidal, exact functor between small, idempotent-complete, stable, symmetric monoidal $\infty$-categories.
  Then $\ind(f) $ is symmetric monoidal and colimit preserving.
  \end{enumerate}
\end{lem}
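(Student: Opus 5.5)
The plan is to reduce both parts to standard results from Lurie on ind-completions, using \cref{lem:ind-basics} for the basic description of $\ind(\ccat)$ as a full subcategory of $\pcat(\ccat)$.

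For part (1), I first establish presentability. Since $\ccat$ is small and idempotent-complete, $\ind(\ccat)$ is compactly generated, hence presentable, by \cite[Thm.5.5.1.1]{htt}. Its compact objects are exactly the retracts of representables, and by idempotent-completeness these are the objects of $\ccat$.

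Next comes stability. Because $\ccat$ is stable, it admits finite colimits, so $\ind(\ccat)$ identifies with the category of finite-limit-preserving (i.e.\ left exact) functors $\ccat^{\op} \to \mathrm{Ani}$ \cite[Cor.5.3.5.4]{htt}. Stability then follows from \cite[Prop.1.1.3.6]{higher-algebra}, which says that $\ind$ of a small stable $\infty$-category is stable.

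For the symmetric monoidal structure, I invoke \cite[Cor.4.8.1.14]{higher-algebra}. Since $\ccat$ is stable and the tensor product is exact in each variable, $\tensor$ preserves finite colimits separately in each variable. The cited result then extends $\tensor$ uniquely to a symmetric monoidal structure on $\ind(\ccat)$ that preserves filtered colimits in each variable and for which the Yoneda embedding $j$ is symmetric monoidal. Filtered colimits and finite colimits together generate all small colimits, so the extended tensor product preserves all colimits in each variable. This exhibits $\ind(\ccat)$ as an object of $\calg(\mathrm{Pr}^L)$, which is exactly what \cref{lem:induced-on-modules} requires.

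For part (2), I use the fact that $\ind(f)$ is by construction the unique filtered-colimit-preserving extension of $j\circ f$. Because $f$ is exact, it preserves finite colimits. Writing an arbitrary colimit as a filtered colimit of finite colimits of compact objects, I conclude that $\ind(f)$ preserves all small colimits; this is \cite[Prop.5.5.1.9]{htt}.

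For the monoidality, the key input is the functoriality of the construction in \cite[Cor.4.8.1.14]{higher-algebra}. This construction is natural in symmetric monoidal functors that preserve the relevant class of colimits; here that class is finite colimits, and an exact functor preserves it. Concretely, the assignment $\ccat \mapsto \ind(\ccat)$ is a symmetric monoidal left adjoint from the category of small $\infty$-categories with finite colimits (with tensor products preserving finite colimits in each variable) to $\mathrm{Pr}^L$. A symmetric monoidal functor $f$ therefore induces a symmetric monoidal structure on $\ind(f)$ extending that of $f$, compatibly with $j$.

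I expect the main obstacle to be this last point: citing correctly that the monoidal structure on $\ind(f)$ exists and is symmetric, rather than merely lax. Informally, the structure maps $\ind(f)(X)\tensor\ind(f)(Y)\to \ind(f)(X\tensor Y)$ are equivalences on representables because $f$ is symmetric monoidal. Both sides preserve filtered colimits in each variable, so the maps are equivalences everywhere. To make this coherent rather than just objectwise, I will formulate it via the universal property of $\ind$ as a symmetric monoidal localization/free cocompletion: \cite[Rem.4.8.1.8, Prop.4.8.1.10]{higher-algebra}.
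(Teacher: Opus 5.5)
Your proposal is correct and takes essentially the same approach as the paper, which proves the whole statement by citing \cite[Lem.5.3.2.11]{higher-algebra}; you unpack that lemma into its standard ingredients (stability of $\ind$, the induced symmetric monoidal structure from \cite[\S 4.8.1]{higher-algebra}, and colimit-preservation of $\ind(f)$ via \cite[Prop.5.5.1.9]{htt}). One small correction: presentability of $\ind(\ccat)$ follows from $\ccat$ admitting finite colimits, which stability guarantees, and not from idempotent-completeness, which only serves to identify the compact objects of $\ind(\ccat)$ with $\ccat$.
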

\begin{proof}
This is \cite[Lem.5.3.2.11]{higher-algebra}.
\end{proof}

\begin{lem}
  \label{lem:t-for-ind}
  Let $\ccat$ be a small, stable $\infty$-category with a t-structure.
  \begin{enumerate}
    \item
    \label{lem:t-for-ind:ant}
  There is a canonical t-structure on $\ind(\ccat)$ such that the embedding $\ccat \hookrightarrow \ind(\ccat)$ is t-exact and all cohomology functors commute with filtered colimits.
    \item
    \label{lem:t-for-ind:subcat}
    Let $\bcat \sse \ccat^{\heartsuit}$ be a full abelian subcategory.
    Set 
    \[
    \ccat_{\cohtext{\bcat}}
    = 
    \Set{x \in \ccat \given \text{$\hh^i(x) \in \bcat$ for all $i$}}
    \sse 
    \ccat,
    \]
    where we write $\hh^i(-)$ for the cohomology functors of the t-structure.
    Then 
    \[
      \ind(\ccat_{\cohtext{\bcat}})
      \sse 
      \Set{x \in \ind(\ccat) \given \text{$\hh^i(x) \in \ind(\bcat)$ for all $i$}}.
    \]
  \end{enumerate}
\end{lem}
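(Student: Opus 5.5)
For part \ref{lem:t-for-ind:ant} I will define the t-structure directly. Set
\[
\ind(\ccat)_{\le 0} \defo \ind(\ccat_{\le 0}), \qquad \ind(\ccat)_{\ge 0} \defo \ind(\ccat_{\ge 0}),
\]
viewed as full subcategories of $\ind(\ccat)$ via \cref{lem:ind-basics}\ref{ind-basics:lift-functor}; this makes sense because the inclusions $\ccat_{\le 0},\ccat_{\ge 0} \hookrightarrow \ccat$ are fully faithful. Alternatively, one may cite the standard construction in \cite[Sect.~C.2.4]{sag} or \cite[Lem.~1.3.5.?]{higher-algebra}.

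The verification runs as follows.
\begin{enumerate}
\item Both subcategories are stable under the relevant shifts, since shifting commutes with filtered colimits.
\item Orthogonality holds. For $X = \colim_i j(x_i)$ and $Y = \colim_k j(y_k)$ we have $\mathrm{Map}(X,Y) = \lim_i \colim_k \mathrm{Map}(x_i,y_k)$, because objects of $\ccat$ are compact in $\ind(\ccat)$. This mapping space is contractible (respectively, $\pi_0$ vanishes) whenever each term is.
\item Truncation triangles exist. The functors $\tau_{\le 0}$ and $\tau_{\ge 1}$ on $\ccat$ are functorial, so applying them to a presentation $p\mc\icat\to\ccat$ from \cref{lem:ind-basics}\ref{ind-basics:presentation} and taking filtered colimits produces a cofiber sequence $\colim \tau_{\ge 1}x_i \to X \to \colim\tau_{\le 0}x_i$. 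Cofiber sequences are preserved because filtered colimits in the stable category $\ind(\ccat)$ are exact.
\end{enumerate}
With the definition as given, $\ccat\hookrightarrow\ind(\ccat)$ is t-exact. The argument in step 3 also shows that $\tau_{\le 0}$ and $\tau_{\ge 0}$ on $\ind(\ccat)$ are computed termwise on presentations. Hence they commute with filtered colimits of such presentations, and since every filtered colimit in $\ind(\ccat)$ can be rewritten as a filtered colimit of objects of $\ccat$, so do all $\hh^i$. A further consequence is the identification $\ind(\ccat)^\heartsuit = \ind(\ccat^\heartsuit)$, obtained by applying $\hh^0$ termwise to a presentation.

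For part \ref{lem:t-for-ind:subcat}, take $X \in \ind(\ccat_{\cohtext{\bcat}})$ and write it as $X=\colim_{i\in\icat} j(x_i)$ with $\icat$ filtered and each $x_i\in\ccat_{\cohtext{\bcat}}$, using \cref{lem:ind-basics}\ref{ind-basics:presentation} for the category $\ccat_{\cohtext{\bcat}}$. The inclusion into $\ind(\ccat)$ preserves filtered colimits, so the same formula holds in $\ind(\ccat)$. By part \ref{lem:t-for-ind:ant},
\[
\hh^n(X)=\colim_i \hh^n(x_i).
\]
This is a filtered colimit of a diagram $\icat\to\bcat$, computed in $\ind(\ccat^\heartsuit)=\ind(\ccat)^\heartsuit$. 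Since $\bcat\hookrightarrow\ccat^\heartsuit$ is fully faithful, $\ind(\bcat)\to\ind(\ccat^\heartsuit)$ is fully faithful and preserves filtered colimits. Therefore $\hh^n(X)$ lies in the essential image of $\ind(\bcat)$.

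I expect the main obstacle to be the bookkeeping in part \ref{lem:t-for-ind:ant}. Specifically, one must check that termwise truncation really yields a functor on $\ind(\ccat)$, and that $\ind(\ccat)^\heartsuit\simeq\ind(\ccat^\heartsuit)$, which is needed to make sense of the colimit above. To handle this, I would define $\tau_{\le 0}$ as $\ind(\tau_{\le 0})$ via \cref{lem:ind-basics}\ref{ind-basics:lift-functor}, which makes the functoriality automatic.
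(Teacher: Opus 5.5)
Your proposal is correct. For part~(2) it is exactly the paper's argument: write $x$ as a filtered colimit of objects of $\ccat_{\cohtext{\bcat}}$ and use that $\hh^i$ commutes with filtered colimits.

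The only difference is in part~(1). The paper does not prove it but simply cites \cite[Prop.2.13]{antieau}. You instead construct the t-structure with halves $\ind(\ccat_{\le 0})$ and $\ind(\ccat_{\ge 0})$ and check the axioms by hand. This makes the lemma self-contained. It also makes explicit the identification $\ind(\ccat)^{\heartsuit}\simeq\ind(\ccat^{\heartsuit})$, and hence the meaning of $\ind(\bcat)\subseteq\ind(\ccat)^{\heartsuit}$. The paper uses this identification tacitly when it writes $\colim_\alpha \hh^i(x_\alpha)\in\ind(\bcat)$. The cost is the bookkeeping you anticipated, and two points in it deserve care.

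In the orthogonality step, rely only on contractibility. Each $\mathrm{Map}(x_i,y_k)$ is contractible by closure under shifts, so the limit over $i$ is contractible. Vanishing of $\pi_0$ alone would not pass to that limit, since there can be a $\lim^1$ contribution.

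For compatibility with filtered colimits, avoid re-presenting an arbitrary filtered colimit by objects of $\ccat$. That route would require checking that the termwise description of truncations is independent of the chosen presentation. A cleaner argument uses the fact that $\ind(\ccat_{\ge 1})$ and $\ind(\ccat_{\le 0})$ are closed under filtered colimits in $\ind(\ccat)$. Then the filtered colimit of the truncation triangles of the $X_\alpha$ is a cofiber sequence whose outer terms lie in the correct halves. By uniqueness of truncation triangles, it is the truncation triangle of $\colim_\alpha X_\alpha$.

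Your own proposed fix also works: define $\tau_{\le 0}$ as $\ind(\tau_{\le 0})$, which preserves filtered colimits by construction, and extend the natural cofiber sequence $\tau_{\ge 1}\to\mathrm{id}\to\tau_{\le 0}$ along $\ind$.
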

\begin{proof}
Part \ref{lem:t-for-ind:ant} is \cite[Prop.2.13]{antieau}.
For \ref{lem:t-for-ind:subcat}, we use \cref{lem:ind-basics}: Let $x \in \ind(\ccat_{\cohtext{\bcat}}) \sse \ind(\ccat)$.
Write $x = \colim_{\alpha \in I} x_{\alpha}$ with $x_{\alpha} \in \ccat_{\cohtext{\bcat}}$ and $I$ filtered.
So by \ref{lem:t-for-ind:ant}, we have
\[
\hh^i(x)
=
\colim_{\alpha \in I} (\hh^i(x_{\alpha})) \in \ind(\bcat).
\qedhere
\]
\end{proof}

\section{Galois representations}

\subsection*{de Rham Galois representations}

\begin{numtext}
The inclusion $\repdr(G_K) \to \repqp(G_K)$ does not induce isomorphisms on $\ext^1$-groups.
In particular, the natural functor 
\[
\dd[b](\repdr(G_K)) \to \dd[b](\repqp(G_K))
\]
is not fully faithful, and containment in its essential image cannot be checked on the level of cohomology groups.
To obtain a lift of $R\Gamma(X_{\bark},\qp)$ along this functor, we use the description of $\dd[b](\repdr(G_K))$ given in \cite[Thm.4.12]{petrov-zavyalov}.
Before we state their theorem, let us recall the setup:
\end{numtext}

\begin{numtext}
Let $\dcat(G_K, \qp) = \dcat(\modcat(G_K,\qp))$ be the derived $\infty$-category of the category $\modcat(G_K,\qp)$ of $\qp$-modules on $BG_{K,\mathrm{pro\acute{e}t}}$.
This will be our appropriate ambient category.
Following \cite[\S 4]{petrov-zavyalov}, we consider the following categories and functors:
\begin{enumerate}
\item
By \cite[Lem.4.3]{petrov-zavyalov}, there is a fully faithful functor $\dd[b](\repqp(G_K)) \hookrightarrow \dcat(G_K,\qp)$.
\item
The topological ring $\bdrp$ yields a commutative algebra object in $\modcat(G_K,\qp)$.
We set 
\[
    \dcat(G_K,\bdrp) 
    =
    \modcat_{\bdrp}(\dcat(G_K,\qp)),
\]
i.e. the subcategory of $\bdrp$-modules in $\dcat(G_K,\qp)$.
Similarly, we define $\dcat(G_K,\bdr)$.
There are natural base-change functors 
\[
\dcat(G_K,\qp) \to \dcat(G_K,\bdrp) \to \dcat(G_K,\bdr).
\]
\item
For $\ccat$ an $\infty$-category, we write $\ccat_{\mathrm{fil}} = \fun((\zz,\leq)^\op,\ccat)$ for the associated category of filtered objects.
We set $\dfil(K) = \dcat(\vect(K))_{\mathrm{fil}}$, where $\dcat(\vect(K))$ is the derived $\infty$-category of $K$-vector spaces.
\end{enumerate}
\end{numtext}

  Theorem 4.12 of \cite{petrov-zavyalov} can now be stated as follows:
  \begin{theorem}
    \label{thm:pz-theorem}
    Let $K$ be a finite extension of $\qp$.
    Then $\dd[b](\repdr(G_K))$ is equivalent, as a symmetric monoidal $\infty$-category, to the fiber product of the following diagram of $\infty$-categories:
    \[
      \begin{tikzcd}
      \dd[b](\repqp(G_K))
      \ar{rd}[below left]{V \mapsto V \tensor_{\qp} \bdrp}
      &
      &
      \dfil(K)
      \ar{ld}
      \\ 
      &
      \dd(G_K,\bdrp)
      \end{tikzcd} 
    \]
  \end{theorem}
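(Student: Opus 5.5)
This statement is a restatement of \cite[Thm.4.12]{petrov-zavyalov}, so the proof is a citation; what is worth recording is how the argument there goes, since its shape is what we use later. I would construct the comparison functor to the fiber product, show it is fully faithful, and then identify its essential image.

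\textbf{The functor.} Send $V \in \repdr(G_K)$ to the triple consisting of:
\begin{itemize}
\item $V$ itself, viewed in $\dd[b](\repqp(G_K))$;
\item the filtered $K$-vector space $D_{\mathrm{dR}}(V)$;
\item the identification $D_{\mathrm{dR}}(V) \tensor_K \bdrp \cong V \tensor_{\qp} \bdrp$.
\end{itemize}
The meaning of the last item needs care. The right vertical functor $\dfil(K) \to \dd(G_K,\bdrp)$ should send a filtered complex $(D,\mathrm{Fil}^\bullet)$ to the lattice
\[
\mathrm{Fil}^0\bigl(D \tensor_K \bdr\bigr)
=
\sum_i \mathrm{Fil}^i D \tensor_K t^{-i}\bdrp,
\]
realized as a filtered colimit (a Rees-type construction). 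With this functor, the de Rham comparison isomorphism says exactly that this lattice agrees with $V \tensor_{\qp} \bdrp$.

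To extend from the heart to bounded derived categories, I would work with exact functors of stable $\infty$-categories, so that all three functors are exact and symmetric monoidal. The fiber product of stable $\infty$-categories along exact functors is again stable, and $\dd[b](\repdr(G_K))$ is generated under finite limits and colimits by its heart. Both $D_{\mathrm{dR}}$ and $V \mapsto V$ are exact on $\repdr(G_K)$, so the universal property of $\dd[b]$ of an abelian category gives the functor, for instance via the stable-$\infty$ version of a derived-category extension result.

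\textbf{Full faithfulness.} Mapping spectra in a fiber product are the pullback of the mapping spectra of the three pieces. So I must show: for de Rham $V, W$,
\[
\mathrm{RHom}_{\repdr}(V,W)
\simeq
\mathrm{R\Gamma}(G_K, V^\vee\tensor W)
\times_{\mathrm{R\Gamma}(G_K,\, V^\vee\tensor W\tensor\bdrp)}
\mathrm{Fil}^0\bigl(D_{\mathrm{dR}}(V^\vee\tensor W)\bigr).
\]
Using internal Homs, this reduces to $V = \qp$. It then becomes the de Rham fundamental exact sequence in derived form:
\begin{itemize}
\item the computation $\mathrm{R\Gamma}(G_K, W\tensor\bdrp)$, via Tate--Sen vanishing $H^i(G_K, \cc_p(j)) = 0$ for $j\neq 0$ and the dévissage along $t^n\bdrp$;
\item the statement that Yoneda $\ext^1$ in $\repdr$ is the subgroup of $H^1(G_K,W)$ of classes dying in $H^1(G_K, W\tensor\bdrp)$, i.e.\ Bloch--Kato's $H^1_g$;
\item the vanishing of higher $\ext^i$ in $\repdr$, matching the fiber product (where one must check $H^2$ maps injectively, etc.).
\end{itemize}
The main obstacle is exactly this comparison of Yoneda Ext groups in the non-extension-closed category $\repdr$ with the pullback of Galois cohomology. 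In particular it requires showing $\ext^2_{\repdr}$ vanishes appropriately via effaceability arguments.

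\textbf{Essential image.} An object of the fiber product has a $\qp$-part with finite-dimensional cohomology. Its compatibility with a filtered $K$-complex forces each cohomology group to be de Rham, by comparing dimensions of $D_{\mathrm{dR}}$, since cohomology commutes with the exact $\tensor\bdrp$ functors. A dévissage on the number of nonzero cohomology groups, using the fully faithful functor and cones, then shows every such object comes from $\dd[b](\repdr(G_K))$. Symmetric monoidality is inherited because each functor in the diagram is symmetric monoidal, and the comparison is built from symmetric monoidal functors.
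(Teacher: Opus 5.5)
Like the paper, you treat this as a citation of \cite[Thm.4.12]{petrov-zavyalov}. Your comparison functor, induced by $\repdr(G_K)\hookrightarrow\repqp(G_K)$ and $\ddr$, with $\dfil(K)\to\dd(G_K,\bdrp)$ given by $D\mapsto\fil^0(D\tensor_K\bdr)$, matches the paper's remark on how that equivalence is constructed.

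One correction to your supplementary sketch: $\ext^2$ in $\dd[b](\repdr(G_K))$ does not vanish in general. In the fiber product, the degree-$2$ maps $\qp\to W[2]$ surject onto $\hh^2(G_K,W)$; for example, this group is nonzero for $W=\qp(1)$. So in degree $2$ one must show these classes are effaceable (Beilinson's criterion), not that they vanish; only degrees $\geq 3$ vanish.
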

  Note that this equivalence is automatically exact, and that their proof constructs the equivalence as induced by the natural functors 
  \[
    \repdr(G_K) \hookrightarrow \repqp(G_K) \text{ and } \ddr \mc \repdr(G_K) \to \dfil(K).
  \]
\subsection*{semi-stable Galois representations}
We recall the following constructions, due to \cite{emerton-kisin} and \cite{deglise-niziol}:
\begin{construction}
Let $M \in \mfcatad(\pphi,N)$ be an admissible filtered $(\pphi,N)$-module.
Emerton-Kisin \cite{emerton-kisin} construct a complex $\kst(M) \in \chcat^{[0,2]}(\vect(\qp))$, which is given as
\[
    \kst(M)
    =
    \begin{tikzcd}[column sep = small]
        M
        \ar{r}[above]{d^0}
        &
        M \oplus M \oplus M_K / \fil^0 M_K 
        \ar{r}[above]{d^1}
        &
        M
    \end{tikzcd},
\]
with differentials 
\begin{equation}
\label{eq:ek-differential}
d^0 \mc x \mapsto (Nx, (\pphi-1)x, \bar{x})
\quad\text{and}\quad
d^1 
\mc 
(a,b,c)
\mapsto 
(1-p\pphi) a + N b.
\end{equation}
Let $\hom_{K_0}(M,T) \in \mfcatad(\pphi,N)$ be the internal Hom-object.
Then we have by \cite[Cor.2.4.4]{emerton-kisin} that
\begin{equation}
  \label{eq:ek-equation}
\hh^i(\kst(\hom_{K_0}(M,T)))
=
\ext^i_{\mfcatad(\pphi,N)}(M,T)
\end{equation}
holds for all $M,T \in \mfcatad(\pphi,N)$.
\par
To get a similar result for weakly admissible filtered $(\pphi,N,G_K)$-modules, D\'eglise-Nizio\l{} construct a complex $\hom^{\flat}(M,T) \in \chcat^{[0,2]}(\vect(\qp))$ such that 
\begin{equation}
  \label{eq:dn-equation}
\hh^i(\hom^{\flat}(M,T))
=
\ext^i_{\mfcatwa}(M,T)
\end{equation}
holds for all $i \geq 0$ \cite[Prop.2.7]{deglise-niziol}.
For our proof of \cref{prop:semistable-derived}, it will be enough to understand $\ext^2_{\mfcatwa}(M,T)$.
For this, we only need the homotopy quotient $\hom^{\sharp}(M,T)$ of $\hom^{\flat}(M,T)$, which is also constructed in \cite[\S 2]{deglise-niziol} and satisfies
\[
\hh^2(\hom^{\flat}(M,T)) = \hh^2(\hom^{\sharp}(M,T)).
\]
It can be constructed as follows:
For $M,T \in \mfcat(\pphi,N,G_K)$, we write $\hom_{G_K}(M,T)$ for the maps that are $\knr$-linear and equivariant for the ($\knr$-semilinear) $G_K$-action.
Moreover, let $\pphi_{\ast} \mc \vect(\knr) \isomorphism \vect(\knr)$ be the equivalence induced by the absolute Frobenius $\pphi \mc \knr \to \knr$.
Then $\hom^{\sharp}(M,T) \in \chcat^{[0,2]}(\vect(\qp))$ is defined as  
\[
\begin{tikzcd}[column sep = small]
\hom_{G_K}(M,T)
\ar{r}[above]{\delta^0}
&
\hom_{G_K}(M,\pphi_{\ast} T) 
\oplus
\hom_{G_K}(M,T)
\ar{r}[above]{\delta^1}
&
\hom_{G_K}(M,\pphi_{\ast}T)
\end{tikzcd},
\]
with differentials 
\begin{align*}
\delta^0 
\mc 
x 
&
\mapsto 
(\pphi_T x - \pphi_{\ast} x \pphi_M, N_T x - x N_M)
\\
\delta^1 
\mc 
(x,y)
&
\mapsto 
(\pphi_\ast N_T) x - p x N_M - p \pphi_T y + (\pphi_{\ast}y) \pphi_M.
\end{align*}
In particular, all $\ext^i_{\mfcatwa}$-groups vanish for $i \geq 3$, and we can use the complex $\hom^{\sharp}(M,T)$ to calculate $\ext^2_{\mfcatwa}$.
\end{construction}
Equipped with these tools, we can show:
\begin{prop}
  \label{prop:semistable-derived}
The inclusion $\repst(G_K) \hookrightarrow \repdr(G_K)$ extends to a fully faithful functor
\[
\dd[b](\repst(G_K))
\hookrightarrow
\dd[b](\repdr(G_K)).
\]
\end{prop}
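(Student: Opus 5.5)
To prove the proposition, we use the standard criterion for full faithfulness of the functor $\dd[b](\acat) \to \dd[b](\bcat)$ induced by the inclusion of a full abelian subcategory $\acat \sse \bcat$ that is closed under extensions. Such a functor is fully faithful if and only if the induced maps $\ext^i_{\acat}(M,T) \to \ext^i_{\bcat}(M,T)$ are isomorphisms for all $i$ and all $M,T \in \acat$. A convenient sufficient form is Yoneda's effaceability condition: for every $i\ge 1$ and every class $e \in \ext^i_{\bcat}(M,T)$ with $M,T\in\acat$, there is an epimorphism $M' \twoheadrightarrow M$ in $\acat$ killing $e$. Instead of effacing classes, we will compare $\ext$-groups directly, degree by degree.

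We pass to linear algebra via the Colmez--Fontaine and Berger equivalences. The category $\repst(G_K)$ is equivalent to $\mfcatad(\pphi,N)$, and the category $\repdr(G_K)$ is equivalent to the category $\mfcatwa$ of weakly admissible filtered $(\pphi,N,G_K)$-modules, which by $p$-adic monodromy is the same as potentially semi-stable representations. Under these equivalences the inclusion becomes the full embedding $\mfcatad(\pphi,N) \hookrightarrow \mfcatwa$, viewing a module as one with trivial inertia action, i.e.\ $M \mapsto M\tensor_{K_0}\knr$ with $G_K$ acting through the second factor.

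The comparison goes as follows, degree by degree.
\begin{enumerate}
\item In degree $0$ the map on $\ext$-groups is an isomorphism because the embedding is full.
\item In degree $1$ it is an isomorphism because $\repst(G_K)$ is closed under extensions in $\repdr(G_K)$. An extension of semi-stable representations that is de Rham is potentially semi-stable, and its inertia action on $\dpst$ is unipotent-free and of finite image. Since $\dst$ is left exact and the dimensions add up, the extension is semi-stable.
\item Degrees $\ge 3$ vanish on both sides. For $\mfcatad(\pphi,N)$ this follows from the amplitude $[0,2]$ of $\kst$ together with \eqref{eq:ek-equation}, and for $\mfcatwa$ it follows from \eqref{eq:dn-equation}.
\item Degree $2$ remains. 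Here we compare $\hh^2(\kst(\hom_{K_0}(M,T)))$ with $\hh^2(\hom^{\sharp}(M,T))$. For $M,T$ with trivial inertia action, the Galois-equivariant $\knr$-linear maps satisfy $\hom_{G_K}(M\tensor\knr,T\tensor\knr) = \hom_{K_0}(M,T)$ by Galois descent along $\knr/K_0$, and $\pphi_\ast$ identifies compatibly. So $\hom^{\sharp}(M,T)$ becomes the complex $\hom_{K_0}(M,T) \to \hom_{K_0}(M,T)^{\oplus 2} \to \hom_{K_0}(M,T)$ with the $(\pphi,N)$-differentials. In degree $2$ this agrees with the last two terms of $\kst(\hom_{K_0}(M,T))$ after discarding the summand $H_K/\fil^0H_K$, where $H=\hom_{K_0}(M,T)$. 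Discarding that summand does not change $\hh^2$: since $d^1$ vanishes on that summand, $\hh^2(\kst(H)) = H/\bigl((1-p\pphi)H + NH\bigr)$, which is exactly the top cohomology of $\hom^{\sharp}$ computed from $\delta^1$, up to the reindexing $x\mapsto \pphi_\ast$. One checks that the natural map of $\ext^2$ (via Yoneda products, or via the natural map of complexes) is this identification.
\end{enumerate}

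With isomorphisms in all degrees, the standard dévissage gives full faithfulness: induct on amplitude using the five lemma on $\hom$ between truncations. The main obstacle is the degree-$2$ step. Matching the normalizations of the two complexes, in particular the $\pphi_\ast$ twist and the factor $p$, is routine. The real work is checking that the abstract comparison map of $\ext^2$ is really induced by the map of complexes. To handle this, we would use the naturality of the Emerton--Kisin and Déglise--Nizioł constructions as $\delta$-functors: both are universal effaceable $\delta$-functors in $T$ agreeing in degree $0$, so the comparison is unique and is given by the map of complexes.
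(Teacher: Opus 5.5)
\paragraph{Verdict.} Your reduction to $\ext$-groups and your handling of degrees $0$, $1$ and $\geq 3$ agree with the paper. For degree $1$ the paper simply cites Hyodo's theorem; in your sketch, the dimensions add up because taking inertia invariants is exact for an action with finite image in characteristic $0$, not because $D_{\mathrm{st}}$ is left exact. Your degree-$2$ computation is also exactly the paper's: its commutative square identifies $\hh^2(\kst(\hom_{K_0}(M,T)))$ with $\hh^2(\hom^{\sharp}(M,T))$. The gap is in how you finish.

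\paragraph{The gap.} That computation only gives \emph{some} isomorphism of finite-dimensional $\qp$-vector spaces $\ext^2_{\mfcatad}(M,T)\cong\ext^2_{\mfcatwa}(jM,jT)$, where $j$ is the embedding $M\mapsto M\tensor_{K_0}\knr$. It says nothing about the natural map $j_\ast$, and the paper stresses that it does not by itself show $j_\ast$ is an isomorphism. Your $\delta$-functor argument does not close this.
\begin{enumerate}
\item To compare on one category you must regard $T\mapsto\ext^i_{\mfcatwa}(jM,jT)$ as a $\delta$-functor on $\mfcatad$. Its effaceability there means killing classes by monomorphisms $T\hookrightarrow T'$ inside $\mfcatad$. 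That is precisely Yoneda's criterion for the proposition itself, so asserting it is circular.
\item Uniqueness only needs the source $\ext^\ast_{\mfcatad}(M,-)$ to be universal. But then you must actually exhibit a morphism of $\delta$-functors that induces your identification and equals $j$ in degree $0$. The map of complexes you have lands in $\hom^{\sharp}$, which forgets the filtration, so its $\hh^0$ and $\hh^1$ are not $\hom_{\mfcatwa}$ and $\ext^1_{\mfcatwa}$ in general. You would need a natural map $\kst(\hom_{K_0}(M,T))\to\hom^{\flat}(jM,jT)$ and the $\delta$-compatibility of the Emerton--Kisin and D\'eglise--Nizio\l{} isomorphisms. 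None of this is provided.
\end{enumerate}

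\paragraph{The missing idea.} No identification of maps is needed, because $j_\ast$ being an isomorphism on $\ext^1$ already makes it injective on $\ext^2$.
\begin{enumerate}
\item Write $e\in\ext^2_{\mfcatad}(M,T)$ as the Yoneda product of $\xi_1\colon 0\to T\to E_1\to Z\to 0$ and $0\to Z\to E_0\to M\to 0$, all in $\mfcatad$. Then $e$ is, up to sign, the image of $\xi_1$ under the connecting map $\ext^1(Z,T)\to\ext^2(M,T)$.
\item If $e$ vanishes in $\mfcatwa$, then $j_\ast\xi_1$ is restricted from $\ext^1_{\mfcatwa}(jE_0,jT)=\ext^1_{\mfcatad}(E_0,T)$.
\item Since $\ext^1_{\mfcatad}(Z,T)\to\ext^1_{\mfcatwa}(jZ,jT)$ is injective, $\xi_1$ is itself restricted from $\ext^1_{\mfcatad}(E_0,T)$. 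Hence $e=0$ already in $\mfcatad$; this is \cite[Prop.3.3]{oort-calculates}.
\end{enumerate}
Your computation then gives $\dim_{\qp}\ext^2_{\mfcatad}(M,T)=\dim_{\qp}\ext^2_{\mfcatwa}(jM,jT)<\infty$. An injective linear map between spaces of equal finite dimension is an isomorphism, and this is how the paper concludes.
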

\begin{proof}
    It suffices to see that the inclusion $\repst(G_K) \hookrightarrow \repdr(G_K)$ induces an equivalence on all Ext-groups.
    For $i = 0$, this is automatic, and for $i = 1$, this is known as Hyodo's theorem (cf. \cite[Thm.4.5.4]{emerton-kisin}).
    For the remaining $i$, we switch to the description of semi-stable (de Rham) representations by (weakly) admissible modules, via \cite{colmez-fontaine} and \cite{berger}.
    So it suffices to show that the inclusion
    \[
    j \mc \mfcatad(\pphi,N) \hookrightarrow \mfcatwa(\pphi,N,G_K),~ M \mapsto M \tensor_{K_0} \knr,
    \]
    with $G_K$ acting trivially on $M$ and canonically on $\knr$, induces an isomorphism on $\ext^i$-groups for $i \geq 2$.
    By \eqref{eq:ek-equation} and \eqref{eq:dn-equation}, it is clear that the only non-trivial case is $i=2$, where we can use $\hom^{\sharp}(M,T)$ for an explicit calculation:
    Observe first that for all $M,T \in \mfcatad(\pphi,N)$, we have
    \[
    \hom_{K_0}(M,T) = \hom_{G_K}(M\tensor_{K_0} \knr, T \tensor_{K_0} \knr),
    \]
    as $K_0$-vector spaces, since $(\knr)^{G_K} = K_0$.
    Moreover, note that the semilinear Frobenius $\pphi \mc M \to M$ becomes linear as a map $M \to  \pphi_\ast M$.
    In particular, we obtain the isomorphism:
    \[
      \begin{tikzcd}
      \alpha 
      \mc
      \hom_{K_0}(M,T)
      \ar{r}[above]{\pphi_{\ast}}
      &
      \hom_{K_0}(\pphi_{\ast} M , \pphi_{\ast} T)
      \ar{r}[above]{- \circ \pphi_{M}}
      &
      \hom_{K_0}(M,\pphi_{\ast} T)
      \end{tikzcd}
    \]
    \begin{claim*}
    The diagram 
    \[
    \begin{tikzcd}[ampersand replacement=\&, row sep = 2.5em, column sep = 4em]
      \hom_{K_0}(M,T)
      \oplus
      \hom_{K_0}(M,T)
      \ar{r}[above]{d' = ((1-p\pphi),N)}
      \ar{d}[left]{{\begin{pmatrix} 0 & \alpha \\ \id & 0 \end{pmatrix}}}
      \&
      \hom_{K_0}(M,T)
      \ar{d}[right]{\alpha}
      \\
      \hom_{K_0}(M,\pphi_{\ast}T)
      \oplus
      \hom_{K_0}(M,T)
      \ar{r}[below]{\delta^1}
      \&
      \hom_{K_0}(M,\pphi_{\ast}T)
    \end{tikzcd}
    \]
    commutes.
    \end{claim*}
    \begin{proof*}
    Let $f,g\in \hom_{K_0}(M,T)$.
    By definition, we have 
    \[
      \alpha(d'(0,g))
      = 
      \pphi_{\ast}N_T \circ \alpha(g)
      -
      \pphi_{\ast}g \circ \pphi_{\ast}N_M \circ \pphi_M
      =
      \pphi_\ast N_T \circ \alpha(g)
      - 
      p 
      \alpha(g)\circ N_M
      =
      \delta^1((\alpha(g),0)),
    \]
    where the second equality used the identity $\pphi_{\ast}N_M \circ \pphi_M = p \pphi_M \circ N_M$ as linear maps $M \to \pphi_\ast M$.
    Moreover, we have $\alpha(\pphi f) = \pphi_T \circ  f$, and thus 
    \[
    \alpha(d'(f,0))
    = 
    \pphi_\ast f \circ \pphi_M 
    -
    p 
    \pphi_T \circ  f 
    = 
    \delta^1((0,f)).
    \qedhere
    \]
    \end{proof*} 
    By the claim, we have $\dim_{\qp} \ext_{\mfcatad}^2(M,T) = \dim_{\qp} \ext_{\mfcatwa}^2(M,T)$.
    Moreover, the fact that $j$ induces an isomorphism on $\ext^1$ forces, by an elementary calculation \cite[Prop.3.3]{oort-calculates}, injectivity of the natural map 
    \[
    j_{\ast}
    \mc
    \ext_{\mfcatad}^2(M,T)
    \isomorphism
    \ext_{\mfcatwa}^2(M,T).
    \]
    Combining this with the dimension equality allows us to conclude.\footnote{The claim alone does not imply that $j_\ast$ is an isomorphism on $\ext^2$.}
\end{proof}

\begin{rem}
  \label{rem:crist-fails}
Note that an analogous statement for crystalline representations is not true --- the inclusion $\repcris(G_K) \hookrightarrow \repst(G_K)$ already fails to induce an isomorphism on $\ext^1$-groups:
Let $q \in \qptimes$ with $\abs{q} < 1$, and $E_q$ the associated Tate elliptic curve.
Its rational Tate module $V_p(E_q)$ sits in an exact sequence of the form 
\[
\begin{tikzcd}[column sep = small]
0
\ar{r}
&
\qp(1)
\ar{r}
&
V_p(E_q)
\ar{r}
&
\qp
\ar{r}
&
0
\end{tikzcd},
\]
cf. \cite[Rem.II.4.4]{berger-intro}.
Both $\qp(1)$ and $\qp$ are crystalline Galois representations, but $V_p(E_q)$ is semi-stable with non-trivial monodromy.
\end{rem}

\section{Relation between the bar construction and unipotent fundamental groups}

\begin{construction}
Let $f \mc X \to \spec(K)$ be an algebraic variety, and $x \in X(K)$ a $K$-rational point.
Then $x$ induces an augmentation
\[
R\Gamma(X_{\bark},\qp) \to \qp,
\]
by applying $Rf_\ast$ to the unit $\qp \to Rx_{\ast}x^\ast \qp$.
Using the notation from \ref{text:bar-const}, we set 
\begin{equation}
  \label{eq:bar-defn}
B(x) \defo (\qp)_{x^\ast} \tensor_{R\Gamma(X_{\bark},\qp)}^{L} {}_{x^\ast}(\qp) \in \ind \dd[b](\repqp(G_K)).
\end{equation}
\end{construction}

\begin{prop}[\cite{corrigendum-dancohen-schlank}]
    Let $F$ be an algebraically closed field in which $p$ is invertible, and let $X$ be an algebraic variety over $F$.
    Let $x\in X(F)$.
    Define $B(x)$ analogously to \eqref{eq:bar-defn}.
    Then there is a commutative diagram 
    \[
    \begin{tikzcd}[column sep = small]
    \locsysuni(X)
    \ar{rr}[above]{\sim}
    \ar{rd}
    &
    &
    \comod(\hh^0(B(x)))
    \ar{ld}
    \\
    &
    \vect(\qp)
    &
    \end{tikzcd}
    \]
    The horizontal map is an equivalence of categories, and the diagonal maps are induced by the augmentations.
    This diagram is moreover functorial in $X$.
\end{prop}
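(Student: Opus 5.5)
We want an equivalence between unipotent $\qp$-local systems on $X$ and comodules over the coalgebra $\hh^0(B(x))$, compatible with the fibre functors and functorial in $X$. The plan is to pass through the derived category of unipotent local systems, use Tannakian-type reconstruction there, and only at the end take $\hh^0$.

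First I will identify unipotent local systems with a full subcategory of $R\Gamma(X,\qp)$-modules. Let $\mathcal{U} \subseteq \dcat(X_{\mathrm{pro\acute et}},\qp)$ be the thick stable subcategory generated by the constant sheaf $\qp$. Its objects are the complexes whose cohomology sheaves are unipotent local systems, and $\locsysuni(X)$ is the heart of the induced t-structure. By the Schwede--Shipley/Lurie recognition principle, the functor $R\Gamma(X,-) = R\mathrm{Hom}(\qp,-)$ identifies $\mathcal{U}$ with the thick subcategory of $\modcat_{R\Gamma(X,\qp)}(\dcat(\qp))$ generated by the free module $R\Gamma(X,\qp)$, i.e.\ the perfect modules. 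This is symmetric monoidal and sends $x^\ast$ to $-\tensor_{R\Gamma} {}_{x^\ast}\qp$.

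Second, I will apply Koszul duality along the augmentation. The functor $M \mapsto \qp_{x^\ast}\tensor_{R\Gamma} M$ lands in $B(x)$-comodules, since $B(x)$ is the comonad $\qp\tensor_{R\Gamma}(-)$ restricted through the augmentation, as in \ref{text:bar-const}. I will show that it is fully faithful on perfect modules, with essential image the comodules whose underlying complex is perfect and ``unipotently generated''. This is the standard Barr--Beck comonadicity argument. It needs conservativity of $x^\ast$ on $\mathcal{U}$, which follows from a dévissage to $\qp$, and it needs a convergence statement.

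Third, I will pass to hearts. Under the equivalence, the t-structure on $\mathcal{U}$ corresponds to the one on comodules detected on underlying complexes. The heart of connective $B(x)$-comodules with underlying object in degree $0$ should be $\comod(\hh^0(B(x)))$. The reason is that $B(x)$ is coconnective in cohomological degrees $\le 0$, because $R\Gamma$ has cohomology in degrees $\geq 0$ with $\hh^0=\qp$. So a comodule concentrated in degree $0$ only sees $\tau^{\ge 0}$, i.e.\ $\hh^0(B(x))$.

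The main obstacle is convergence, because $X$ may be non-simply-connected over $F$ and $R\Gamma(X,\qp)$ need not be simply connected. Then Eilenberg--Moore convergence fails in general, and there are higher cohomology classes not coming from $\pi_1$. This is exactly the subtlety addressed in the corrigendum. I would first restrict to the heart and argue via $\ext^1$: both categories are Tannakian with fibre functors, so it suffices to show the induced map of Hopf algebras $\mathcal{O}(\piuni)\to\hh^0(B(x))$ is an isomorphism. I would prove this by comparing filtrations. The length filtration on $\hh^0(B(x))$ has graded pieces that embed into tensor powers of $\hh^1$, with relations cut out by cup products into $\hh^2$. On the unipotent side, the lower central series quotients are described by $\hh^1$ and the kernel of cup product on $\hh^2$. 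An induction on unipotency length, using $\ext^1_{\locsysuni}$ versus $\hh^1$ and injectivity of $\ext^2_{\locsysuni}\to\hh^2$, gives the isomorphism. Finally, functoriality in $X$ follows because every step is natural in the pair $(X,x)$.
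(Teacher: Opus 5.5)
The paper does not prove this statement; it imports it from Dan-Cohen--Schlank, so your argument has to stand on its own. The step you finally rely on, the comparison of $\mathcal{O}(\piuni)$ with $\hh^0(B(x))$ in your last paragraph, is a genuine gap.

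The embedding of the graded pieces of $\hh^0(B(x))$ into $\hh^1(X,\qp)^{\tensor n}$ is fine. But "relations cut out by cup products" is only right in length $\le 2$. In length $n\ge 3$ the relations come from the higher Eilenberg--Moore differentials (Massey products), and the same holds for the lower central series quotients of $\piuni$.

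More seriously, you never say how the Hopf algebra map interacts with the two filtrations, or where the injectivity $\ext^2_{\locsysuni}(\qp,\qp)\hookrightarrow\hh^2(X,\qp)$ enters. For that you need something you have not constructed. Either option below would do:
\begin{itemize}
\item The classical five-lemma induction on the bar length filtration. This needs an augmented algebra $A'$, such as $R\Gamma(\piuni,\qp)$, with $\hh^0(\mathrm{Bar}(A'))\cong\mathcal{O}(\piuni)$, and a multiplicative map $A'\to A=R\Gamma(X,\qp)$ that is an isomorphism on $\hh^{\le 1}$ and injective on $\hh^2$.
\item Stallings' criterion for pro-unipotent groups. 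This needs a map from $\ext^2$ in $\comod(\hh^0(B(x)))$ to $\hh^2(X,\qp)$, which in turn requires $A\simeq\mathrm{Tot}(B(x)^{\tensor\bullet})$.
\end{itemize}

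That last identification is exactly the convergence you believed fails, and it does not fail. Step 2 does not need the Eilenberg--Moore statement $B(x)\simeq C^\ast(\Omega X)$, which is false already for $S^1$ and irrelevant here. Full faithfulness of $M\mapsto\qp\tensor_A M$ on $\perf(A)$ reduces, because $-\tensor_A N$ commutes with $\mathrm{Tot}$ for perfect $N$, to the algebraic claim that $A\to\mathrm{Tot}(\qp^{\tensor_A(\bullet+1)})=\mathrm{Tot}(B(x)^{\tensor\bullet})$ is an equivalence. This holds for the following reasons:
\begin{itemize}
\item Choose a model with $A^0=\qp$ and $A^{<0}=0$. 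Then the reduced bar complex lies in cohomological degrees $\ge 0$, so $B(x)$ is coconnective. This is the opposite of what you assert in Step 3.
\item Consequently the layers $\overline{B(x)}^{\tensor n}[-n]$ of the $\mathrm{Tot}$-tower lie in degrees $\ge n$, and $\mathrm{Tot}$ agrees with the direct-sum cobar complex.
\item $\Omega\,\mathrm{Bar}(A)\simeq A$ holds for every augmented dg algebra.
\end{itemize}
Cohomology of $X$ not coming from $\pi_1$ is carried by $\hh^{>0}(B(x))$ and obstructs nothing.

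With this, your Steps 1--3 already finish the proof, once Step 3 uses the correct truncation $\tau^{\le 0}$:
\begin{itemize}
\item Maps from a discrete object into a coconnective one are discrete. So a $B(x)$-comodule structure on a complex concentrated in degree $0$ is the same as an $\hh^0(B(x))=\tau^{\le 0}B(x)$-comodule structure.
\item The functor is fully faithful and exact, so its essential image is closed under extensions.
\item $\hh^0(B(x))$ is conilpotent by the bar length filtration. Hence every finite-dimensional comodule is an iterated extension of $\qp$ and lies in the image.
\end{itemize}
If $B(x)$ lived in degrees $\le 0$, as you wrote, the mapping spaces would not be discrete and the reduction to $\hh^0$ would fail. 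With the corrected degrees you get the equivalence on hearts, compatible with the fibre functors and natural in $(X,x)$, with no filtration comparison needed.
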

\begin{numtext}
In particular, this implies that there is a $G_K$-equivariant isomorphism
\[
  \hh^0(B(x))
  \cong 
  A_{\infty,x}
\]
for $B(x)$ associated  to an algebraic variety $X$ over $K$.
So if $B(x)$ lies in $\ind \dd[b](\repdr(G_K))$, this implies that $\pi_1^{\mathrm{\acute{e}t,uni}}(X_{\bark},x)$ is pro-de Rham in the sense of \cref{defn:de-rham-start}.
The analogous statement holds for \enquote{de Rham} replaced by \enquote{semi-stable} or \enquote{crystalline}.
\end{numtext}

\begin{numtext}
These results extend to path torsors:
Let $y \in X(K)$ be a second $K$-rational point.
Pullback along $x$ and $y$ induces fiber functors $x^{\ast},y^{\ast}\mc\locsysuni(X_{\bark}) \to \vect(\qp)$, respectively.
Consider the functor from $\qp$-algebras to sets given by 
\[
A \mapsto \Set{\text{$\tensor$-natural $A$-linear isomorphisms $A \tensor x^\ast \isomorphism A \tensor y^\ast$}}.
\]
This is pro-representable by an affine $\qp$-scheme
\[
\piuni(X_{\bark},x,y) = \spec(A_{\infty,x,y}),
\]
cf. \cite[Thm.9.48(a)]{milne-algebraic-groups}.
Moreover, $A_{\infty,x,y}$ is naturally an element of $\ind\repqp(G_K)$, cf. \cite[\S 2]{kim-selmer}.
We say that $\piuni(X_{\bark},x,y)$ is \emph{pro-de Rham} if $A_{\infty,x,y} \in \ind\repdr(G_K)$, and make the analogous definitions for semi-stability and crystallinity.
\par 
Let $y^\ast \mc R\Gamma(X_{\bark},\qp) \to \qp$ be the augmentation induced by $y$.
Again as in \ref{text:bar-const}, we can then consider 
\[
B(x,y) \defo (\qp)_{x^\ast} \tensor_{R\Gamma(X_{\bark},\qp)}^{L} {}_{y^\ast} (\qp)\in \ind \dd[b](\repqp(G_K)).
\]
\end{numtext}
\begin{prop}[\cite{corrigendum-dancohen-schlank}]
  There is a Galois-invariant isomorphism 
  \[
    \hh^0 (B(x,y))
    \isomorphism
    A_{\infty,x,y}.
  \]
\end{prop}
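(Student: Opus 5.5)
We deduce the path-torsor version from the based-point statement via the same Tannakian formalism, using the proposition of Dan-Cohen--Schlank quoted above, which identifies $\locsysuni(X_{\bark})$ with $\comod(\hh^0(B(x)))$ compatibly with fiber functors. The strategy is to realize both $\hh^0(B(x,y))$ and $A_{\infty,x,y}$ as the coordinate ring of the torsor of tensor isomorphisms between two fiber functors on the same Tannakian category, and then compare.

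\textbf{Step 1: comodule structure.} By the bar formalism of \ref{text:bar-const}, $B(x,y)$ is a $B(x)$-$B(y)$-bicomodule. We first check that $B(x)$, $B(y)$ and $B(x,y)$ are coconnective, i.e.\ concentrated in nonnegative degrees. This holds because $R\Gamma(X_{\bark},\qp)$ is coconnective with $\hh^0 = \qp$ when $X$ is connected; one can use the standard filtration of the bar complex by the terms $\qp \tensor \overline{R\Gamma}^{\tensor n}\tensor \qp$. Passing to $\hh^0$ is then lax symmetric monoidal on coconnective objects, so $\hh^0(B(x,y))$ is an $\hh^0(B(x))$-$\hh^0(B(y))$-bicomodule. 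It is moreover a commutative algebra, because $R\Gamma$ is commutative and $B(x,y)$ is the relative tensor product of commutative algebras $\qp\tensor_{R\Gamma}\qp$.

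\textbf{Step 2: identifying the torsor.} By Dan-Cohen--Schlank's equivalence, the fiber functor $y^\ast$ on $\locsysuni(X_{\bark})$ corresponds to the functor on $\comod(\hh^0(B(x)))$ given by cotensoring with the bicomodule $\hh^0(B(x,y))$, i.e.\ $V \mapsto \hh^0(B(y,x))\,\square_{\hh^0 B(x)}\, V$. This follows from the functoriality statement applied to a base change of the form $\hh^0(B(x,y)) \tensor_{\hh^0 B(y)} -$. The coordinate ring of $\underline{\mathrm{Isom}}^{\tensor}(x^\ast,y^\ast)$ is then computed by the standard Tannakian reconstruction as the coend $\int^{V} x^\ast V^\vee\tensor y^\ast V$. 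We compare this coend with $\hh^0(B(x,y))$ by writing the latter as a colimit over the universal unipotent objects (the truncations $\hh^0(B(x))_{\leq n}$ of the comodule filtration) and using that these are cofinal among unipotent local systems.

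\textbf{Galois equivariance.} Every construction above is natural in $X$ and in the points, so we base change from $\bark$ and use that $G_K$ acts through automorphisms of $X_{\bark}$ fixing $x$ and $y$. The resulting isomorphism $\hh^0(B(x,y))\cong A_{\infty,x,y}$ is therefore $G_K$-equivariant.

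The main obstacle is Step 2: matching the cotensor or coend description with the $\infty$-categorical relative tensor product. Here I would try first to exploit that $y^\ast$ is exact and commutes with filtered colimits, so that it suffices to check the identification on the universal unipotent pro-object.
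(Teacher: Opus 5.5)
\textbf{Gap: Step 2 assumes the identification it needs.} The paper gives no argument for this statement; it imports it from the Dan-Cohen--Schlank corrigendum. So your reduction has to stand on its own, and it does not, because the key identification in Step 2 is assumed rather than proved.

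First, the claim that under $\locsysuni(X_{\bark})\simeq\comod(\hh^0(B(x)))$ the fiber functor $y^\ast$ becomes cotensoring with $\hh^0(B(x,y))$ cannot follow from the quoted functoriality. (You also switch midway to $B(y,x)$.) That functoriality is for morphisms respecting the base point, and no such morphism relates $x$ to $y$. Worse, Tannakian duality already says that $y^\ast$ is cotensoring with $A_{\infty,x,y}$ over $A_{\infty,x}$. Identifying the bicomodule that represents $y^\ast$ with $\hh^0(B(x,y))$ is therefore equivalent to the proposition itself.

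Second, ``writing $\hh^0(B(x,y))$ as a colimit over the universal unipotent objects'' is again the theorem. Exactness of $y^\ast$ does not help, because $B(x,y)$ is not the $y$-fiber of the universal ind-object. Take $X=\mathbf{P}^1$: $\piuni(X_{\bark},x)$ is trivial and all universal objects are $\qp$. But $A=R\Gamma(X_{\bark},\qp)\simeq\qp\oplus\qp(-1)[-2]$ is formal, and $B(x,y)$ is the tensor coalgebra on one class of degree $1$, so $\hh^n(B(x,y))$ is one-dimensional for every $n\geq 0$. What is missing is a reason why $\hh^0$ alone does not see that $X_{\bark}$ fails to be a unipotent $K(\pi,1)$.

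\textbf{A way to supply it.} Put $A=R\Gamma(X_{\bark},\qp)$.
\begin{enumerate}
\item For unipotent $\mathcal{L}$, the map $(\qp)_{y^\ast}\tensor_A R\Gamma(X_{\bark},\mathcal{L})\to y^\ast\mathcal{L}$ adjoint to restriction along $y$ is an equivalence, by d\'evissage from $\mathcal{L}=\qp$. This passes to filtered colimits.
\item Apply it to the universal ind-object $\mathcal{E}=\colim_n\mathcal{E}_n$ with $x^\ast\mathcal{E}=A_{\infty,x}$. Tannakian theory alone gives $y^\ast\mathcal{E}\cong A_{\infty,x,y}$, up to path inversion.
\item Restriction to $x$ followed by the counit gives an $A$-linear map $R\Gamma(X_{\bark},\mathcal{E})\to{}_{x^\ast}(\qp)$. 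Its fiber is concentrated in degrees $\geq 2$, since $\hh^0(X_{\bark},\mathcal{E})=\qp$ and $\hh^1(X_{\bark},\mathcal{E})=\colim_n\hh^1(X_{\bark},\mathcal{E}_n)=0$. The vanishing holds because extensions of unipotent local systems are unipotent and the regular representation is injective.
\item The augmentation ideal $\bar{A}$ lies in degrees $\geq 1$. The bar filtration then shows that $(\qp)_{y^\ast}\tensor_A(-)$ preserves objects concentrated in degrees $\geq 2$. Hence $\hh^0(B(y,x))\cong y^\ast\mathcal{E}$, and $B(y,x)\simeq B(x,y)$ because $A$ is commutative.
\end{enumerate}
This construction is natural in automorphisms of $(X_{\bark},x,y)$, which is what Galois equivariance requires. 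Note that $G_K$ acts by semilinear automorphisms, not by morphisms of $\bark$-varieties, so your ``functorial in $X$'' must be read in that sense.

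\textbf{A smaller point in Step 1.} The variance is wrong: lax monoidal functors transport algebras, not coalgebras or comodules. What you actually use is that $\hh^0$ is strong monoidal on coconnective objects, which holds by K\"unneth over a field.
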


\section{Proof of de Rhamness and semi-stability}
\begin{theorem}
  \label{thm:main-theorem}
  Let $X$ be an algebraic variety over $K$, and let $x,y\in X(K)$.
  Then \[\hh^0(B(x,y))\in \ind(\repqp(G_K))\] is ind-de Rham.
\end{theorem}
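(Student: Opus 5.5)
Our plan is to lift the commutative algebra $R\Gamma(X_{\bark},\qp)$, together with the augmentations $x^\ast,y^\ast$, to the symmetric monoidal category $\dd[b](\repdr(G_K))$, and then transport the bar construction along the ind-extension of the forgetful functor. Write $\ccat = \dd[b](\repdr(G_K))$ and $\dcat_0 = \dd[b](\repqp(G_K))$, and let $f\mc \ccat \to \dcat_0$ be the functor induced by the exact, symmetric monoidal inclusion $\repdr(G_K)\hookrightarrow \repqp(G_K)$. Both categories are small, stable, idempotent complete and symmetric monoidal. Hence by \cref{lem:lift-to-ind} the functor $F = \ind(f)\mc \ind(\ccat)\to\ind(\dcat_0)$ is a symmetric monoidal, colimit-preserving functor between presentable stable symmetric monoidal $\infty$-categories.

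\textbf{Step 1 (the lift).} By \cref{thm:pz-theorem}, producing a commutative algebra $\wtilde{A}\in\calg(\ccat)$ with $f(\wtilde{A}) \simeq R\Gamma(X_{\bark},\qp)$, together with lifts $\wtilde{x},\wtilde{y}\mc \wtilde{A}\to\qp$ of the augmentations, amounts to giving three pieces of data:
\begin{enumerate}
\item the algebra $R\Gamma(X_{\bark},\qp)\in\calg(\dcat_0)$ with its augmentations;
\item the filtered de Rham complex $R\Gamma_{\mathrm{dR}}(X/K)$ with its Hodge (Deligne) filtration in $\calg(\dfil(K))$, with augmentations given by pullback along $x$ and $y$;
\item a symmetric monoidal identification of both algebras after base change to $\dd(G_K,\bdrp)$, compatible with the augmentations.
\end{enumerate}
The last item is the derived, multiplicative étale--de Rham comparison
\[
R\Gamma(X_{\bark},\qp)\tensor_{\qp}\bdrp\simeq \fil^0\bigl(R\Gamma_{\mathrm{dR}}(X/K)\tensor_K\bdr\bigr),
\]
available for arbitrary varieties, e.g.\ via Beilinson's or Déglise--Nizioł's construction. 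Its naturality in $X$, applied to $x,y\mc \spec K\to X$, gives compatibility with the augmentations. Since the equivalence of \cref{thm:pz-theorem} is symmetric monoidal, it induces an equivalence on $\calg$ and on augmented algebras, so the data assemble to the desired $(\wtilde{A},\wtilde{x},\wtilde{y})$.

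\textbf{Step 2 (formal transport).} Apply \cref{lem:induced-on-modules} and \cref{rem:lift-essential} to $F$ and to the lift $\wtilde{A}$, viewed in $\ind(\ccat)$ through the Yoneda embedding. This gives
\[
F\bigl(B(\wtilde{A},\wtilde{x},\wtilde{y})\bigr)\simeq B(x,y).
\]
So $B(x,y)$ lies in the essential image of $\ind(\ccat)\to\ind(\dcat_0)$.

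\textbf{Step 3 (cohomology).} The object $B(\wtilde{A},\wtilde{x},\wtilde{y})$ is a filtered colimit of objects of $\ccat$, and $F$ preserves filtered colimits. Since $\repdr(G_K)$ is closed under subquotients in $\repqp(G_K)$, the image of $\ccat$ in $\dcat_0$ lies in $(\dcat_0)_{\cohtext{\repdr(G_K)}}$. It follows that $B(x,y)\in \ind\bigl((\dcat_0)_{\cohtext{\repdr(G_K)}}\bigr)$, and \cref{lem:t-for-ind}\ref{lem:t-for-ind:subcat} yields $\hh^0(B(x,y))\in\ind(\repdr(G_K))$. We also need $\ind(\repdr(G_K))\to\ind(\repqp(G_K))$ to be fully faithful, which follows from \cref{lem:ind-basics}.

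\textbf{Main obstacle.} We expect Step 1 to be the hard part. \cref{thm:pz-theorem} is an equivalence only at the level of $\infty$-categories, so one needs the comparison isomorphism as an equivalence of $E_\infty$-algebras, not merely of complexes, and functorial in $X$ so that it respects the augmentations. We would first try to obtain this from a sheaf-level comparison, for instance on the pro-étale site via $\mathcal{O}\mathbb{B}_{\mathrm{dR}}^+$ and the filtered de Rham complex, or via h-descent from the smooth proper case. Here pushforward along $X\to\spec K$ is lax symmetric monoidal, which produces the algebra structures automatically.
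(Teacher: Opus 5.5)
Your proposal is correct and follows the paper's proof essentially step by step. You lift $R\Gamma(X_{\bark},\qp)$ and its augmentations to $\calg(\dd[b](\repdr(G_K)))$ via the fiber-product description of \cref{thm:pz-theorem}, with the multiplicative, functorial \'etale--de Rham comparison (Beilinson) as the only non-formal input; you then transport the bar construction along the symmetric monoidal, colimit-preserving ind-extension of the forgetful functor and conclude with \cref{lem:t-for-ind}, exactly as the paper does, and your formulation $R\Gamma(X_{\bark},\qp)\tensor_{\qp}\bdrp\simeq\fil^0(R\Gamma_{\mathrm{dR}}(X/K)\tensor_K\bdr)$, with the augmentations matched over $\bdrp$, is the precise version of what the paper writes more loosely.
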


\begin{proof}
  By \cref{lem:lift-to-ind}, the natural functor $\dd[b](\repdr(G_K)) \to \dd[b](\repqp(G_K))$ extends naturally to ind-categories, i.e. we have a commutative diagram of the form
  \[
  \begin{tikzcd}
  \dd[b](\repdr(G_K))
  \ar{d}[left]{\Psi}
  \ar[hookrightarrow]{r}
  &
  \ind \dd[b](\repdr(G_K))
  \ar{d}[right]{\ind \Psi}
  \\
  \dd[b](\repqp(G_K))
  \ar[hookrightarrow]{r}
  &
  \ind \dd[b](\repqp(G_K))
  \end{tikzcd}
  \]
  Theorem 4.12 of \cite{petrov-zavyalov} shows that we can lift $A = R\Gamma(X_{\bark},\qp) \in \calg(\dd[b](\repqp(G_K)))$ to an object 
  \[ \wtilde{A} \in \calg(\dd[b](\repdr(G_K))),\] in the case when $X$ is smooth proper and $\bark$ is replaced by $\cp$.
  But the proof of their theorem shows that the only non-formal input is that the \'etale-de Rham comparison for $\hh^i(X_{\bark},\qp)$ can be extended to a canonical isomorphism of algebra objects 
  \[
    R\Gamma(X_{\bark},\qp) 
    \tensor_{\qp}
    \bdrp
    \isomorphism 
    R\Gamma_{\mathrm{dR}}(X/K) 
    \tensor_K
    \bdrp
  \]
  in $\dd(G_K,\bdrp)$.
  For general algebraic varieties, this isomorphism is supplied by Beilinson's comparison isomorphism \cite[3.5.5]{beilinson}.
  In a similar way, we can lift the augmentations $A \to \qp$ to $\dd[b](\repdr(G_K))$, since they agree with the augmentations $R\Gamma_{\mathrm{dR}}(X/K) \to K$ of Deligne's de Rham cohomology after base-change to $\dd(G_K,\bdr)$ (here we use again that the augmentations come from rational points).
  Because $\ind \Psi$ is symmetric monoidal, it lifts to a symmetric monoidal functor $\ind(\Psi) \mc \modcat(\wtilde{A}) \to \modcat(A)$ by \cref{lem:induced-on-modules}.
  Arguing as in \cref{rem:lift-essential} shows that $B(x,y)$ lifts along $\ind(\Psi)$ to $\wtilde{B}(x,y) \in \ind(\dd[b](\repdr(G_K)))$.
  We conclude by \cref{lem:t-for-ind}.
\end{proof}

\begin{cor}
  \label{cor:main-cor}
  Keep the setup of \cref{thm:main-theorem}, and assume moreover that all cohomology groups $\hh^i(X_{\bark},\qp)$ are semi-stable.
  Then \[\hh^0(B(x,y)) \in \ind(\repqp(G_K))\] is ind-semi-stable.
\end{cor}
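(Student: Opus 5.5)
From the proof of \cref{thm:main-theorem} we already have lifts $\wtilde{A} \in \calg(\dd[b](\repdr(G_K)))$ of $A = R\Gamma(X_{\bark},\qp)$ and of both augmentations $x^\ast, y^\ast$. The plan is to push these one step further down, into $\dd[b](\repst(G_K))$, and then rerun the formal argument of \cref{thm:main-theorem} with $\repst$ in place of $\repdr$.

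\textbf{Step 1: realize $\repst$ inside $\repdr$ at the derived level.} By \cref{prop:semistable-derived}, the functor $\iota\mc \dd[b](\repst(G_K)) \hookrightarrow \dd[b](\repdr(G_K))$ is fully faithful. It is also exact and symmetric monoidal, because the inclusion $\repst(G_K) \hookrightarrow \repdr(G_K)$ is exact and tensor-compatible. I then want to identify its essential image with
\[
\dd[b](\repdr(G_K))_{\cohtext{\repst(G_K)}},
\]
the objects all of whose cohomology objects are semi-stable. One containment is clear. For the other, I would induct on the cohomological amplitude. Given $C$ with semi-stable cohomology, the truncation triangle
\[
\tau^{<n}C \to C \to \hh^n(C)[-n]
\]
exhibits $C$ as a cone of a map $\hh^n(C)[-n-1] \to \tau^{<n}C$. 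Both ends lie in the image of $\iota$ by induction. Since $\iota$ is full, this map comes from $\dd[b](\repst(G_K))$, so its cone does too. Thus the essential image is a stable full subcategory closed under these extensions, and it equals the subcategory described. This subcategory is also closed under tensor products, so the full symmetric monoidal structure restricts to it.

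\textbf{Step 2: lift the algebra and augmentations.} The underlying object of $\wtilde{A}$ has cohomology $\hh^i(X_{\bark},\qp)$, which is semi-stable by hypothesis. By Step 1, $\wtilde{A}$ lies in the essential image of $\iota$. Because $\iota$ is a fully faithful symmetric monoidal functor, $\calg(\iota)$ is fully faithful, and an algebra whose underlying object lies in the image lifts. Concretely, the image is a full symmetric monoidal subcategory containing the unit and closed under $\tensor$, so the operadic structure restricts. This gives $\hat{A} \in \calg(\dd[b](\repst(G_K)))$ with $\iota(\hat{A}) \simeq \wtilde{A}$. The augmentations $\wtilde{A} \to \qp$ lift to maps of commutative algebras $\hat{A} \to \qp$ by the same full faithfulness, since $\qp$ is semi-stable.

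\textbf{Step 3: the formal argument.} Apply \cref{lem:lift-to-ind} to $\iota$ and to $\Psi \circ \iota$. By \cref{lem:induced-on-modules} and \cref{rem:lift-essential}, the lifted algebra and augmentations give an object $\hat{B}(x,y) \in \ind\dd[b](\repst(G_K))$ mapping to $B(x,y)$. Now \cref{lem:t-for-ind}\ref{lem:t-for-ind:subcat}, applied to $\ccat = \dd[b](\repqp(G_K))$ and $\bcat = \repst(G_K)$, shows that $\hh^0(B(x,y)) \in \ind(\repst(G_K))$. This needs the image of $\dd[b](\repst(G_K))$ in $\dd[b](\repqp(G_K))$ to land in $\ccat_{\cohtext{\bcat}}$, which holds because $\Psi \circ \iota$ is t-exact on hearts.

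\textbf{Main obstacle.} The delicate point is Step 2: passing from a lift of the underlying object to a lift of the $E_\infty$-algebra structure together with its augmentations. This works only because full faithfulness on the full derived category is available, and that is exactly what \cref{prop:semistable-derived} supplies. Without it, as in the crystalline case of \cref{rem:crist-fails}, the argument breaks down.
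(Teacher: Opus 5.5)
Your proposal is correct and takes essentially the same route as the paper. Both use \cref{prop:semistable-derived} to see that the lift $\wtilde{A}$ and its augmentations already come from $\dd[b](\repst(G_K))$, since this can be checked on cohomology, and then rerun the bar-construction argument of \cref{thm:main-theorem}, concluding with \cref{lem:t-for-ind}. Your Steps 1 and 2 simply make explicit what the paper compresses into ``we can check this on cohomology groups'': the identification of the essential image by induction on amplitude, and the lifting of the $E_\infty$-structure and augmentations via full faithfulness of $\calg(\iota)$.
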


\begin{proof}
  By \cref{prop:semistable-derived}, we have a symmetric monoidal inclusion 
  \[\dd[b](\repst(G_K)) \hookrightarrow \dd[b](\repdr(G_K)).\]
  If all the $\hh^i(X_{\bark},\qp)$ are semi-stable, then we already have $R\Gamma(X_{\bark},\qp) \in \dd[b](\repst(G_K))$, since we can check this on cohomology groups by the previous sentence.
  So for the lift $\wtilde{B}(x,y)$ from the proof of \cref{thm:main-theorem}, we have in fact $\wtilde{B}(x,y) \in \ind \dd[b](\repst(G_K))$, as desired.
  The remaining argument is the same as in the proof of \cref{thm:main-theorem}.
\end{proof}

\section{Proof of crystallinity}
Despite \cref{rem:crist-fails}, we can also prove a slightly weaker crystalline analogue of \cref{thm:main-theorem} in the good reduction case.
For this, we use the more robust way to directly encode crystallinity of $R\Gamma(X_{\bark},\qp)$ via syntomification.
We do not recall the theory here, since we can run a formal argument once we know that the syntomic-to-\'etale realization has the following abstract properties:
\begin{theorem}
  \label{thm:syn}
Let $K/\qp$ be a finite extension.
Write $\repcriszp(G_K)$ for the category of finite-free $\zp$-modules $T$ with $G_K$-action such that $T[1/p]$ is crystalline.
\begin{enumerate}
\item
\label{thm:syn-realization}
There is a symmetric monoidal realization functor 
$
T_{\ok} 
\mc 
\perf(\oksyn)
\to 
\dd[b](\repcriszp(G_K)),
$
that becomes an equivalence after passing to isogeny categories:
\[
T_{\ok}[1/p]
\mc 
\perf(\oksyn)[1/p]
\isomorphism
\dd[b](\repcris(G_K)),
\]
\item
\label{thm:syn-cohomology}
 Let $f\mc \calx \to \spf(\ok)$ be a smooth proper formal scheme.
 Then $Rf_\ast \qp$ is contained in the essential image of $T_{\ok}[1/p]$.
\end{enumerate}
\end{theorem}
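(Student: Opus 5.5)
The plan is to assemble \cref{thm:syn} from the recent theory of prismatic $F$-gauges (Bhatt--Lurie, Drinfeld), together with the classification of crystalline lattices by vector bundles on $\oksyn$ due to Bhatt--Lurie and Guo--Reinecke. I do not intend to build the theory from scratch. I will only check that the abstract properties we need follow from statements available there.

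\textbf{Part \ref{thm:syn-realization}: the realization functor.} I define $T_{\ok}$ as the \'etale realization. Restrict a perfect complex on $\oksyn$ to the open locus where the Hodge--Tate divisor is inverted. This gives a perfect complex of Laurent $F$-crystals on the prismatic site of $\ok$. Via the prismatic--\'etale correspondence of Bhatt--Scholze, such complexes are equivalent to perfect complexes of $\zp$-local systems on $\spec(K)$, that is, to $\dd[b]$ of finite $\zp[G_K]$-modules. Both steps are pullbacks or monoidal equivalences, so $T_{\ok}$ is symmetric monoidal and exact.

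\textbf{Part \ref{thm:syn-realization}: landing in crystalline lattices.} To see that $T_{\ok}$ lands in $\dd[b](\repcriszp(G_K))$ after inverting $p$, I first treat vector bundles: by Guo--Reinecke and Bhatt--Lurie, $T_{\ok}$ sends them to crystalline lattices, and this restricts to an equivalence $\mathrm{Vect}(\oksyn)\simeq\repcriszp(G_K)$. For general perfect complexes, I would argue that $\perf(\oksyn)[1/p]$ is generated under finite colimits, shifts and retracts by vector bundles. Because $\repcris(G_K)$ is closed under kernels and cokernels in $\repqp(G_K)$, cohomology of the image stays crystalline.

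\textbf{Part \ref{thm:syn-realization}: equivalence after inverting $p$.} Essential surjectivity then follows, since $\dd[b](\repcris(G_K))$ is generated by its heart and every crystalline representation has a lattice coming from a vector bundle. For full faithfulness it suffices, by d\'evissage and the monoidal structure (internal Homs), to compare
\[
\mathrm{R}\!\hom_{\perf(\oksyn)}(\mathcal{O},\mathcal{E})[1/p]
\quad\text{with}\quad
\mathrm{R}\!\hom_{\dd[b](\repcris(G_K))}(\qp,T_{\ok}(\mathcal{E})[1/p])
\]
for a vector bundle $\mathcal{E}$. The left side is syntomic cohomology, the fiber of $\pphi-1$ on the Nygaard-filtered prismatic complex. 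Rationally, this is the complex $[D \to D\oplus D_K/\fil^0D_K]$ with $D=\dcris$, the crystalline analogue of $\kst$ without the monodromy term. The right side is computed by Ext in admissible filtered $\pphi$-modules, which is given by the same two-term complex. So I must check that the comparison map realizes this identification: in degree $0$ by full faithfulness on the heart, and in degree $1$ by identifying syntomic $\hh^1$ with crystalline extensions (Bloch--Kato $\hh^1_f$, as in Nekov\'a\v{r}--Nizio\l). Both sides vanish in degrees $\ge 2$.

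\textbf{Main obstacle.} I expect this $\ext^1$ comparison, i.e.\ full faithfulness beyond the heart, to be the hardest step. Note that the target is $\dd[b](\repcris(G_K))$ and not $\dd[b](\repqp(G_K))$, so the Galois-cohomology $\hh^1$ must be cut down to $\hh^1_f$ exactly. My first attempt would be to show that an extension of vector bundles on $\oksyn$ is again a vector bundle. Combined with the equivalence on the heart, this gives surjectivity on $\ext^1$, and injectivity comes from the degree-$0$ statement via the long exact sequence.

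\textbf{Part \ref{thm:syn-cohomology}.} For smooth proper $f\colon\calx\to\spf(\ok)$, let $f^{\mathrm{syn}}\colon\calx^{\mathrm{syn}}\to\oksyn$ be the induced map. I would use the relative syntomic cohomology $Rf^{\mathrm{syn}}_\ast\mathcal{O}$, which is perfect by properness and smoothness (Bhatt--Lurie). Its \'etale realization is $Rf_\ast\zp$ on the generic fiber, by the \'etale comparison theorem for prismatic cohomology. Since that comparison is compatible with base change and the cup product, the identification holds in $\dd[b]$ and not merely on cohomology. Inverting $p$ gives $Rf_\ast\qp$ in the essential image of $T_{\ok}[1/p]$.
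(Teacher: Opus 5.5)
\textbf{There is a genuine gap: the functor into $\dd[b](\repcris(G_K))$ is never actually constructed.} The integral target $\dd[b](\repcriszp(G_K))$ is not addressed at all. Your construction via Laurent $F$-crystals and Bhatt--Scholze produces a functor to $D^b_{\mathrm{lisse}}(\spec(K),\zp)$, which after inverting $p$ lands in $\dd[b](\repqp(G_K))$.

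Your argument that its values have crystalline cohomology fails. For a cofiber sequence $A\to B\to C$, the group $\hh^i(C)$ is an \emph{extension} of $\ker(\hh^{i+1}(A)\to\hh^{i+1}(B))$ by $\mathrm{coker}(\hh^i(A)\to\hh^i(B))$. But $\repcris(G_K)$ is not closed under extensions in $\repqp(G_K)$. The Tate curve of \cref{rem:crist-fails} gives a map $\qp\to\qp(1)[1]$ between objects with crystalline cohomology whose fiber $V_p(E_q)$ is not crystalline. So ``generated by vector bundles'' together with closure under kernels and cokernels proves nothing.

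More seriously, even a functor whose values have crystalline cohomology does not factor through $\dd[b](\repcris(G_K))$. The functor $\dd[b](\repcris(G_K))\to\dd[b](\repqp(G_K))$ is not full: on $\ext^1(\qp,\qp(1))$ it is the proper inclusion $\hh^1_f(G_K,\qp(1))\subset\hh^1(G_K,\qp(1))$, of dimensions $[K:\qp]$ and $[K:\qp]+1$. This is exactly the phenomenon the paper stresses. Consequently, the comparison map between $R\hom_{\perf(\oksyn)}(\mathcal{O},\mathcal{E})[1/p]$ and $R\hom_{\dd[b](\repcris(G_K))}(\qp,V)$ that you propose to check does not exist. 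The only map your construction provides goes to $R\Gamma(G_K,V)$. For $V=\qp(1)$, that complex has $\hh^1$ of the wrong dimension and $\hh^2=\qp\neq 0$. Cutting $\hh^1$ down to $\hh^1_f$ is a factorization that has to be constructed, not a property to be verified.

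\textbf{What producing the factorization requires.} One route is to identify $\perf(\oksyn)[1/p]$ with the bounded derived category of the isogeny category of vector bundles, via a bounded t-structure with that heart plus a realization argument. Another is to use the universal property of bounded derived $\infty$-categories of exact categories to get a functor $\dd[b](\repcriszp(G_K))\to\perf(\oksyn)$ from the vector bundle equivalence, and invert it after inverting $p$. That route needs the equivalence $\mathrm{Vect}(\oksyn)\simeq\repcriszp(G_K)$ to be compatible with short exact sequences. Either route also needs, between vector bundles:
\begin{itemize}
\item vanishing of rational syntomic $\ext^{\geq 2}$;
\item an identification of rational syntomic $\ext^1$ with $\hh^1_f$ (you only assert the crystalline formula for syntomic cohomology);
\item above all, generation of $\perf(\oksyn)[1/p]$ by vector bundles under finite colimits, shifts and retracts.
\end{itemize}
Your d\'evissage rests on the last point, which you only assert. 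It is not formal for perfect complexes on the stack $\oksyn$, and it is itself a consequence of the equivalence being proved.

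By contrast, the step you single out as the main obstacle is formal: an extension of vector bundles is a vector bundle, since this is local. It shows that syntomic $\ext^1$ classes realize to crystalline extensions. With full faithfulness on vector bundles, it also gives injectivity of the realization on $\ext^1$. It does not touch the issues above.

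This missing content is exactly what the paper outsources. It proves \cref{thm:syn} by citing \cite[Prop.4.15]{pentland} for part \ref{thm:syn-realization} and \cite[Prop.4.7]{pentland} for part \ref{thm:syn-cohomology}. Your argument for part \ref{thm:syn-cohomology}, via $Rf^{\mathrm{syn}}_*\mathcal{O}$ and the \'etale comparison, is reasonable once part \ref{thm:syn-realization} is available with $T_{\ok}[1/p]$ compatible with the \'etale realization.
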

\begin{proof}
Part \ref{thm:syn-realization} is \cite[Prop.4.15]{pentland}.
Part \ref{thm:syn-cohomology} follows from \cite[Prop.4.7]{pentland}.
\end{proof}

\begin{theorem}
Let $K/\qp$ be a finite extension, and $X$ an algebraic variety that arises as the generic fiber of a smooth proper formal scheme $\calx$ over $\spf(\ok)$.
Let $x,y \in \calx(\ok)$.
Then \[\hh^0(B(x,y)) \in \ind(\repqp(G_K))\] is ind-crystalline.
\end{theorem}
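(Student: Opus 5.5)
We follow the pattern of the proof of \cref{thm:main-theorem}, but now we lift through the syntomic category instead of through $\dd[b](\repdr(G_K))$. Set $\ccat = \perf(\oksyn)[1/p]$ and let $\Phi \mc \ccat \to \dd[b](\repqp(G_K))$ be the composite of the equivalence $T_{\ok}[1/p]$ from \cref{thm:syn}\ref{thm:syn-realization} with the natural functor $\dd[b](\repcris(G_K)) \to \dd[b](\repqp(G_K))$. This composite is symmetric monoidal and exact, since $T_{\ok}$ is. After checking idempotent completeness (or passing to idempotent completions, which does not change $\ind$), \cref{lem:lift-to-ind} shows that $\ind\Phi \mc \ind(\ccat) \to \ind\dd[b](\repqp(G_K))$ is symmetric monoidal and colimit preserving between presentable stable categories.

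The next step is to lift the algebra $A = R\Gamma(X_{\bark},\qp)$ together with both augmentations. Take $\wtilde{A} = Rf_\ast\qp \in \ccat$, the syntomic cohomology object of $f \mc \calx \to \spf(\ok)$. The point of working with the syntomic side rather than directly in $\dd[b](\repcris(G_K))$ is that $\wtilde{A}$ is naturally a commutative algebra: $Rf_\ast$ of the unit is lax monoidal. By \cref{thm:syn}\ref{thm:syn-cohomology}, its realization is $A$, and the identification $\Phi(\wtilde{A}) \simeq A$ should be one of algebra objects because the realization is compatible with pushforward and is symmetric monoidal. Each section $x \in \calx(\ok)$ gives a map $\spf(\ok)\to\calx$, and the unit $\qp \to Rx_\ast x^\ast\qp$ followed by $Rf_\ast$ gives an augmentation $\wtilde{x}\mc \wtilde{A} \to \idob_{\ccat}$. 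Naturality of the realization identifies $\Phi(\wtilde{x})$ with $x^\ast$ on the generic fibre, and the same holds for $y$. This is exactly why we need integral points.

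With these lifts, \cref{lem:induced-on-modules} gives a symmetric monoidal lift $\modcat_{\wtilde{A}}(\ind\ccat) \to \modcat_A(\ind\dd[b](\repqp(G_K)))$. By \cref{rem:lift-essential}, $B(x,y) \simeq \ind\Phi(B(\wtilde{A},\wtilde{x},\wtilde{y}))$. Because $\Phi$ factors through the equivalence with $\dd[b](\repcris(G_K))$, $B(x,y)$ lies in the essential image of $\ind\dd[b](\repcris(G_K))$. We still have to check that the functor $\dd[b](\repcris(G_K)) \to \dd[b](\repqp(G_K))$ is t-exact and that its cohomology functors land in $\repcris$ on the heart. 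Then every object in its image has crystalline cohomology, and it lies in $\dd[b](\repqp(G_K))_{\cohtext{\repcris(G_K)}}$. \cref{lem:t-for-ind}\ref{lem:t-for-ind:subcat} then gives $\hh^0(B(x,y)) \in \ind\repcris(G_K)$.

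I expect the main obstacle to be the coherence in the second step. \cref{thm:syn}\ref{thm:syn-cohomology} as stated only gives an object in the essential image. We need it as a \emph{commutative algebra} together with compatible augmentations, that is, an equivalence $\Phi(\wtilde{A})\simeq A$ in $\calg$ that intertwines $\wtilde{x},\wtilde{y}$ with $x^\ast,y^\ast$. I would first try to extract this from the construction in \cite{pentland}, where the realization should arise from a symmetric monoidal natural transformation between syntomic and étale pushforward along $\calx\to\spf(\ok)$ and its sections. Functoriality in the morphisms $x,y \mc \spf(\ok)\to\calx$ would then give the augmentations for free. A further point to check is that the realization of $Rf_\ast$ on the formal scheme agrees, as an algebra, with $R\Gamma(X_{\bark},\qp)$ of the algebraic variety, which relies on comparing rigid-analytic and algebraic étale cohomology.
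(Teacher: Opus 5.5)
Your proposal is correct and follows essentially the same route as the paper. You lift $R\Gamma(X_{\bark},\qp)$ and the augmentations through the symmetric monoidal equivalence $T_{\ok}[1/p]$, note that $\dd[b](\repcris(G_K)) \to \dd[b](\repqp(G_K))$ lands in complexes with crystalline cohomology, and conclude with \cref{lem:induced-on-modules}, \cref{rem:lift-essential} and \cref{lem:t-for-ind}. The coherence point you flag is the one the paper leaves implicit under ``arguing as before'': identifying the realized syntomic $Rf_\ast$ with $R\Gamma(X_{\bark},\qp)$ as a commutative algebra, compatibly with the augmentations from $x,y\in\calx(\ok)$. Your suggested fix, the lax monoidal comparison map between realized syntomic and \'etale pushforward (an isomorphism by Pentland's results), is the natural way to make it precise.
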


\begin{proof}
Again by \cref{lem:lift-to-ind}, the functor $T_{\ok}[1/p]$ lifts to a symmetric monoidal equivalence 
\[
\ind(T_{\ok}[1/p])
\mc
\ind(\perf(\ok)[1/p])
\isomorphism 
\ind(\dd[b](\repcris(G_K))).
\]
Write $f\mc \calx \to \spf(\ok)$ for the structure map.
By \cref{thm:syn} we have $Rf_{\ast}\qp \in \im(T_{\ok}[1/p])$.
The essential image of $\dd[b](\repcris(G_K)) \to \dd[b](\repqp(G_K))$ is contained in 
\[
\dd^b_{\cohtext{\mathrm{cris}}}
=
\Set{ C \in \dd[b](\repqp(G_K)) \given \text{$\hh^i(C) \in \repcris(G_K)$ for all $i$}}
\sse 
\dd[b](\repqp(G_K)).
\]
Arguing as before shows that $B(x,y)$ lies in $\ind \dd^b_{\cohtext{\mathrm{cris}}}$.
\end{proof}

\printbibliography

  \par\bigskip
  {\small\itshape
    \noindent Email address:
    \href{mailto:wild@math.uni-frankfurt.de}{wild@math.uni-frankfurt.de}\par
  }
\end{document}